\newtheorem{thm}{Theorem}[section]
\newtheorem{lemma}[thm]{Lemma}
\newtheorem{conj}[thm]{Conjecture}
\newcommand{\QG}{\mathbb{Q}[G]}
\newcommand{\ZG}{\mathbb{Z}[G]}
\newcommand{\ZQ}{\mathbb{Z}[Q_{28}]}
\newcommand{\ZQn}{\mathbb{Z}[Q_{4n}]}
\newcommand{\IQ}{IG^*}
\renewcommand{\vec}[1]{\boldsymbol{#1}}
\begin{document}

\date{}
\title{An exotic presentation of $Q_{28}$}
\author{W.H. Mannan\footnote{Supported by the Leverhulme Trust}, Tomasz Popiel}


\maketitle

\vspace{-9mm}
\begin{abstract}{We introduce a new family of presentations for the quaternion groups and show that for the quaternion group of order 28, one of these presentations has non-standard second homotopy group.}
\end{abstract}

\bigskip

{\tiny\noindent {\bf MSC classes} Primary: 57M05, 57M20.  Secondary: 20C05, 16S34, 20C10, 55P15, 55Q91, 55N25

\noindent {\bf Keywords:} Group presentation, homotopy group, Wall's D(2) problem}

\section{Introduction}

Since the work of Johnson \cite{John3,John1} and Beyl and Waller \cite{Beyl, Beyl2} in the early 2000's, the hunt has been on to find out if  a finite balanced presentation of a quaternion group $Q_{4n}$ can have non-standard second homotopy group.  This has largely been fuelled by the connection to Wall's famous $D(2)$ problem \cite{John1}.  However until the present work, for each quaternion group $Q_{4n}$, all known presentations have had second homotopy group $IQ_{4n}^*$, the dual of the augmentation ideal, and it was conjectured that anything else would be impossible. 

We show that such a presentation is in fact possible.  That is, the purpose of the present work is to introduce a new family of presentations for $Q_{4n}$, and to show that in the case $n=7$, one (at least) of these presentations has a non-standard second homotopy group:

\bigskip
\noindent{\bf Theorem A.} {\sl We have a presentation for the quaternion group $Q_{28}${\rm:} $$\mathcal{P}'= \langle x,y\,\vert\, y^2=x^7,\quad y^{-1}xyx^{2}=x^3y^{-1}x^2y\rangle,$$  which has a non-standard second homotopy group.  That is, if $X_{\mathcal{P}'}$ is the Cayley complex associated to $\mathcal{P}'$ and $X_{\mathcal{P}}$ is the Cayley complex associated to the standard presentation{\rm:}$$\mathcal{P}= \langle x,y\,\vert\, y^2=x^7,\quad xyx=y\rangle,$$ then $\pi_2(X_{\mathcal{P}'})\not\cong \pi_2(X_{\mathcal{P}})$ as modules over $\ZQ$.}

\bigskip
Note that whilst $\pi_2(X_{\mathcal{P}})$ is known to be generated by a single element over $\ZQ$,  we will show that $\pi_2(X_{\mathcal{P'}})$ is not.  Thus when we say that $\pi_2(X_{\mathcal{P}'})\not\cong \pi_2(X_{\mathcal{P}})$, we mean that this holds with respect to all identifications of the groups presented by $\mathcal{P}$ and $\mathcal{P}'$.  Therefore $X_{\mathcal{P}}$ and $X_{\mathcal{P'}}$ are not homotopy equivalent.

In fact we will show that $X_{\mathcal{P}'}$ has the same second homotopy group as a 3-complex constructed by Beyl and Waller \cite{Beyl}, sometimes referred to in the community as Nancy's Toy.  They state \cite[p.908]{Beyl} that such an $X_{\mathcal{P}'}$, if it exists, will not be homotopy equivalent to the spine of a closed 3--manifold.   We thank J. Nicholson for pointing out that our ${\mathcal{P}'}$ thus resolves the question of whether all finite balanced presentations of closed 3--manifold fundamental groups are homotopy equivalent to such spines.

Another application is given by \cite[Proposition 5.5]{Nich1}, where our construction is used to present non-homotopy equivalent manifolds in dimensions 4 and above, which become diffeomorphic under stabilisation by taking the connected sum with a product of spheres.

\bigskip
Given a presentation $\mathcal{Q}$ for a group $G$, let $X_{\mathcal{Q}}$ denote its Cayley complex.  By the second homotopy group of $\mathcal{Q}$ we refer to the $\mathbb{Z}[G]$ module with underlying abelian group $\pi_2(X_{\mathcal{Q}})$ with natural (right) $G$-action arising intuitively from stretching elements of $\pi_2(X_{\mathcal{Q}})$ back along loops in $G=\pi_1(X_{\mathcal{Q}})$.

It is non-trivial to construct finite presentations $\mathcal{Q}, \mathcal{Q}'$ of the same group $G$, with the same deficiency (number of generators minus number of relators) but with different second homotopy groups.  In particular, the Hurewicz homomorphism identifies $\pi_2(X_{\mathcal{Q}})\cong H_2(\widetilde{X_{\mathcal{Q}}})$, and Schanuel's lemma then implies that: $$\pi_2(X_{\mathcal{Q}})\oplus F \cong \pi_2(X_{\mathcal{Q}'}) \oplus F,$$ for some free finitely generated module $F$ over $\ZG$.

In other words, we require non-cancellation of free modules over $\ZG$.  Note that in the case of finite groups, we have cancellation over $\QG$ for all finitely generated modules.  Thus distinguishing $\pi_2(X_{\mathcal{Q}})$ from $\pi_2(X_{\mathcal{Q}'})$ requires subtle number theoretic considerations.

None the less it has been achieved \cite[\S1.7]{Jens}.  For the trefoil group, Lustig, building on the work of Dunwoody and Berridge produced infinitely many presentations with the same deficiency but pairwise distinct second homotopy groups \cite{Berr,Dunw,Lust}.  For finite groups, homotopically distinct presentations with the same deficiency were found by Metzler for ${C_5}^3$ \cite[p.105]{Metz, Lati1}.  Linnell \cite[Corollary 1.4(iii) and (iv)]{Linn} clarified the situation for second homotopy groups: for $p$ a prime satisfying $p\equiv 1 \mod 4$ there are precisely two homotopy types of presentation for  ${C_p}^3$, but they have isomorphic second homotopy groups.  On the other hand for $p$ a prime satisfying  $p\equiv 1 \mod 6$, we have three homotopically distinct presentations of ${C_p}^4$ and they all have non-isomorphic second homotopy groups (with respect to any identification of the presented groups).

The case of quaternion groups has been the subject of much analysis \cite{John3, John1, Beyl, Beyl2}, largely because of its relation to Wall's $D(2)$ problem.  In 1965 Wall showed  that for $n>2$, if a finite cell complex is cohomologically $n$ dimensional (in the  sense of having no non-trivial cohomology in dimensions above $n$ with respect to any coefficient bundle), then it is in fact homotopy equivalent to an actual $n$ dimensional cell complex \cite{Wall}.  Subsequently it was shown by Swan and Stallings that the only cohomologically 1 dimensional finite cell complexes are disjoint unions of wedges of circles \cite{Stal, Swan1}.  However decades later the case $n=2$ remains a major open problem, known as Wall's $D(2)$ problem.  

A $D(2)$--complex is a finite (connected) 3 dimensional cell complex $Y$, with no cohomology above dimension 2.  To solve the problem one would need to produce a $D(2)$--complex which was not homotopy equivalent to a finite 2-complex (or show that this cannot be done).  Note that without loss of generality such a 2-complex is (the Cayley complex of) a finite presentation of $\pi_1(Y)$.  The existence of such a space $Y$ with a particular fundamental group is equivalent to there being an algebraic 2--complex over the group which is not  geometrically realisable \cite{John5,John1,Mann2,Mann4}.  Using this it has been show that such a space $Y$ cannot have certain fundamental groups, such as cyclic groups, products of the form $C_{\infty} \times C_n$  \cite{Edwa} or dihedral groups \cite{John,John1,Mann1,Shea,Mann6,Hamb}.

On the other hand $D(2)$--complexes have been produced and conjectured to not be homotopy equivalent to any finite presentation of their fundamental group.  Broadly these spaces fall into two categories: 

\begin{enumerate}
\item Those where it is conjectured that there is no finite presentation of their fundamental group with the same Euler characteristic.
\item Those where there are finite presentations of their fundamental group with the same Euler characteristic, but it is conjectured that none of them have the same second homotopy module.
\end{enumerate}

A third less explored option would be to prohibit a finite presentation based on $k$-invariants (see \cite[Chapter 6]{John1}), rather than Euler characteristic or second homotopy group.

Many spaces falling into the first category have been proposed \cite{Brid, Grun, Mann7}.  To actually verify that there is no presentation with sufficiently low Euler characteristic will require a fundamentally novel obstruction.  Ideas from geometric group theory and algebraic geometry \cite{Mann8} have been mooted.  

A quintessential example of a space that fell into the first category had fundamental group a free product of several $C_p \times C_p$ as $p$ ranged over distinct primes.  However it was shown that presentations of these groups with sufficiently low Euler characteristics did indeed exist \cite{Hog}.

As has been mentioned, fundamental groups of spaces falling into the second category require a certain failure in the cancellation of free modules.  Although not necessary, the most prominent examples proposed with finite fundamental group are those where cancellation fails even within the stable class of free modules.  From the  Swan--Jacobinski Theorem \cite[\S15]{John1} we know that such groups must necessarily have a binary polyhedral group as a quotient (see \cite{Nich} for more detailed analysis of which groups this failure of cancellation occurs over).

This makes it natural to look at the binary polyhedral groups themselves. Swan showed that the binary polyhedral groups where cancellation fails in the stable class of free modules are precisely $Q_{4n}$ for $n\geq6$ \cite[Theorem I]{Swan3}.  

Based on this work, spaces were constructed which fell into the second category with fundamental group $Q_{2^k}$ with $k\geq 5$ \cite {John3} and fundamental group $Q_{28}$ \cite{Beyl,Beyl2}.  That is, their second homotopy group was not $IQ_{4n}^*$, and it was conjectured that no finite presentation of $Q_{4n}$ would have a second homotopy group other than $IQ_{4n}^*$.

We prove this conjecture false, by displaying a finite presentation with a second homotopy group different to $IQ_{4n}^*$.  In fact, based on our result Nicholson has shown that there are no solutions to Wall's $D(2)$ problem with fundamental group $Q_{28}$ \cite[Theorem 8.11]{Nich2}.  That is any $D(2)$--complex with fundamental group $Q_{28}$ having minimal Euler characteristic is either homotopy equivalent to $\mathcal{P}$ or $\mathcal{P'}$.  Nicholson has further shown \cite[Theorem 8.10]{Nich2} that our example resolves one direction of Problem D3, from Wall's list \cite{Wall2}. This direction of the problem is a conjecture that any failure of a certain local-global principle results in a solution to the $D(2)$ problem.

It is worth noting that our result means that finite presentations have now been found which defy the relevant conjectures for quintessential examples of spaces which fell into both the first and second category.  It is worth then considering the possibility that finite presentations can always be found, homotopy equivalent to a given $D(2)$--complex.  

Note that a finite presentation of a group (possibly not the fundamental group of the 3--complex) may always be found, so that applying Quillen's plus construction results in a space homotopy equivalent to the 3--complex \cite{Mann5}.  On the other hand a famous result of Bestvina and Brady yields a similar situation where there is no finite presentation of the group at all  \cite{Best}.  

Broadly, the prevailing opinion is that an example from category 1 or 2, will succeed in not being homotopy equivalent to a finite presentation.  The present work is not sufficient to alter that prevailing opinion, but it does draw attention to the possibility.

\bigskip
\noindent {\bf Acknowledgments} We acknowledge that the present work is built on the foundations laid by F.E.A. Johnson, F. Rudolf Beyl, and the late Nancy Waller, who is greatly missed.  We would also like to thank the National Science Foundation for award DMS-0918418 which allowed the first author to meet two of these key players in the field.  Also Johnny Nicholson has made several valuable observations since the first draft of this work was produced, in addition to extending the work in various directions in his own articles.  Finally we thank the referee for diligent scrutiny and helpful suggestions.

\section{The standard presentation}\label{stan}
Let $Q_{4n}$ denote the quaternion group with standard presentation: $$\mathcal{P}= \langle x,y\,\vert\, y^2=x^n,\,\, y=xyx\rangle.$$  
Let $X_{\mathcal{P}}$ denote the Cayley complex of this presentation (where relations $a=b$ are interpreted as relators $a^{-1}b$).  The edges in $X_{\mathcal{P}}$ corresponding  to $x,y$ may be lifted to edges in $\widetilde {X_{\mathcal{P}}}$, represented by generators $\vec{e_1}, \vec{e_2}\in C_1(\widetilde {X_{\mathcal{P}}})$ respectively.  Similarly the two disks in $X_{\mathcal{P}}$ corresponding to the two relations in $\mathcal{P}$ may be lifted to disks in  $\widetilde {X_{\mathcal{P}}}$ , represented by generators 
$\vec{E_1}, \vec{E_2}\in C_2(\widetilde {X_{\mathcal{P}}})$ respectively.

Then $\pi_2(X_{\mathcal{P}})$ is a (right) module over $\mathbb{Z}[\pi_1(X_{\mathcal{P}})]=\ZQn$.  Further $\pi_2(X_{\mathcal{P}})$ may be identified via the Hurewicz isomorphism (as modules over $\ZQn$), with the kernel of the boundary map: $$\partial_2\colon C_2(\widetilde {X_{\mathcal{P}}}) \to C_1(\widetilde {X_{\mathcal{P}}}).$$

We may describe the boundary map $\partial_2$ explicitly as follows:
\begin{align*}
\partial_2\colon \vec{E_1} &\mapsto \vec{e_1}\partial_x(y^{-2}x^n)+\vec{e_2}\partial_y(y^{-2}x^n)&=&\,\,\vec{e_1}\sigma_x-\vec{e_2}(1+y),\\
\partial_2\colon \vec{E_2} &\mapsto \vec{e_1}\partial_x(y^{-1}xyx)+\vec{e_2}\partial_y(y^{-1}xyx)&=& \,\,\vec{e_1}(1+yx)+\vec{e_2}(x-1).
\end{align*}

Here $\partial_x, \partial_y$ denote the free Fox derivative with respect to $x,y$ respectively \cite{Fox} and $\sigma_x$ denotes the group ring element $1+x+x^2+x^3+\cdots+x^{n-1}$.

For proofs of the following see for example \cite[Lemma 4.2]{Beyl} or \cite{John3}.  The module $\pi_2(X_{\mathcal{P}})=$ ker$(\partial_2)$ is generated by: \begin{eqnarray}\vec{u}=\vec{E_1}(x-1)+\vec{E_2}(1-yx).\label{ugen}\end{eqnarray}
Further, the annihilator of $\vec{u}$ is precisely $\Sigma_G \ZQn$, where $\Sigma_G$ denotes the sum of all group elements in $Q_{4n}$.  Letting $\IQ$ denote the module $\ZQn/\Sigma_G\ZQn$, we may conclude that  \begin{eqnarray}\pi_2(X_{\mathcal{P}})= \vec{u}\ZQn \cong \IQ.\label{stanpi2}\end{eqnarray}

\section{The new presentations}\label{pressec}
We now describe a new family of presentations $\mathcal{E}_{n,r}$, where the parameter $r$ is an integer:

$$\mathcal{E}_{n,r}= \langle x,y\,\vert\, y^2=x^n,\quad y^{-1}xyx^{r-1}=x^ry^{-1}x^2y\rangle.$$  

Clearly $Q_{4n}$ is a quotient of the group presented by $\mathcal{E}_{n,r}$, for any $r \in \mathbb{Z}$, as both relations hold for the standard generators $x,y \in Q_{4n}$.  In particular:
\begin{eqnarray}
\label{idenholdsinQ28}  y^{-1}xyx^{r-1}=x^{-1} x^{r-1}= x^{r}x^{-2}=x^ry^{-1}x^2y
\end{eqnarray}

However  $\mathcal{E}_{n,r}$ need not be a presentation for $Q_{4n}$.  If we specialize to $r=3$ though, it is a presentation for $Q_{4n}$, as we shall see.

\begin{lemma} \label{abr=3}
Let $a,b \in G$ for some group $G$ satisfy{\rm:} \begin{eqnarray} ab^2&=&b^3a^2,\label{firstab}\\ba^2&=&a^3b^2. \label{secab}\end{eqnarray} Then $ba=1$.
\end{lemma}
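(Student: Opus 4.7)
The plan is to manipulate the two relations~(\ref{firstab}) and~(\ref{secab}) algebraically until they collapse to the much stronger identity $b^2 = a^{-2}$, after which a single substitution will yield $ba = 1$. The relations are symmetric under the swap $a \leftrightarrow b$, so any tactic should treat them on equal footing.

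First, I would rearrange each hypothesis to solve for a cube in terms of a square. From~(\ref{firstab}), multiplying on the left by $a^{-1}$ gives $b^2 = a^{-1} b^3 a^2$, and from~(\ref{secab}), multiplying on the left by $b^{-1}$ gives $a^2 = b^{-1} a^3 b^2$. The natural next move is to substitute the second expression into the first, producing
\[
b^2 \;=\; a^{-1} b^3 \bigl(b^{-1} a^3 b^2\bigr) \;=\; a^{-1} b^2 a^3 b^2.
\]
Cancelling $b^2$ on the right then yields $1 = a^{-1} b^2 a^3$, i.e.\ $b^2 = a^{-2}$.

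Once this key consequence is in hand, I would substitute $b^2 = a^{-2}$ back into~(\ref{firstab}): the left-hand side becomes $a \cdot a^{-2} = a^{-1}$, while the right-hand side becomes $b \cdot a^{-2} \cdot a^2 = b$. Hence $b = a^{-1}$, which is the desired conclusion $ba = 1$.

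I do not anticipate a genuine obstacle here; the only nonobvious step is spotting the right substitution in the second paragraph, after which the argument is purely mechanical. A sanity check that $ba=1$ is consistent with both hypotheses (substituting $b=a^{-1}$ reduces each side of~(\ref{firstab}) and~(\ref{secab}) to $a^{-1}$ and $a$ respectively) confirms that no further miracle is required.
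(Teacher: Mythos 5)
Your proof is correct, and it follows essentially the same route as the paper: both arguments combine the two relations to extract the key identity $b^2=a^{-2}$ (the paper writes it as $a^2b^2=1$) and then substitute back into one of the hypotheses to get $b=a^{-1}$.
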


\begin{proof}
Multiplying (\ref{firstab}) through by $a^2$ on the left we get: $$a^2b^3a^2=a^3b^2=ba^2,$$
from (\ref{secab}).  Thus $a^2b^2=1$ so $b^2a^2=1$ and (\ref{secab}) reduces to $b^{-1}=a$.
\end{proof}

\begin{lemma}\label{3isgood}
The presentation $\mathcal{E}_{n,3}$ presents $Q_{4n}$ for all $n \geq 2$.
\end{lemma}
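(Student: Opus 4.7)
My plan is to derive the standard quaternion relation $xyx = y$ inside the group $G$ presented by $\mathcal{E}_{n,3}$. Combined with $y^2 = x^n$, this exhibits $G$ as a quotient of $Q_{4n}$; together with the opposite surjection already implicit in (\ref{idenholdsinQ28}), the identity on generators will give mutually inverse surjections, yielding $G \cong Q_{4n}$.

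To obtain $xyx = y$, I would invoke Lemma \ref{abr=3} with $a := x$ and $b := y^{-1}xy$. Since $(y^{-1}xy)^2 = y^{-1}x^2 y$ holds in any group, the second defining relation of $\mathcal{E}_{n,3}$ is already in the form $ba^2 = a^3 b^2$ of (\ref{secab}); so the crux is to produce the companion relation $ab^2 = b^3 a^2$ of (\ref{firstab}).

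The companion relation is where the first defining relation $y^2 = x^n$ comes in. Under the inner automorphism $\phi \colon z \mapsto y^{-1} z y$, we have $\phi(a) = b$ by definition, while $\phi(b) = y^{-2} x y^2 = x = a$, since $y^2 = x^n$ commutes with $x$. Thus $\phi$ swaps $a$ and $b$ at the level of this pair, and applying $\phi$ to the known equation $ba^2 = a^3 b^2$ yields precisely $ab^2 = b^3 a^2$. Lemma \ref{abr=3} then delivers $ba = 1$, i.e., $y^{-1}xyx = 1$, equivalently $xyx = y$.

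I expect the only non-routine step to be spotting this swap symmetry between $a$ and $b$ induced by the relation $y^2 = x^n$; it is this feature that singles out the exponent $r = 3$ in the family $\mathcal{E}_{n,r}$, aligning both hypotheses (\ref{firstab}) and (\ref{secab}) of the lemma on the nose. For other values of $r$ the two sides of the swapped relation would not line up to fit the lemma, which is consistent with the authors' earlier remark that $\mathcal{E}_{n,r}$ need not present $Q_{4n}$ in general.
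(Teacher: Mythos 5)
Your proposal is correct and takes essentially the same route as the paper: the paper also sets the pair $\{x,\, y^{-1}xy\}$, reads off one of the two hypotheses of Lemma \ref{abr=3} directly from the second defining relation, and obtains the other by conjugating that relation by $y$ using the centrality of $y^2=x^n$ — your ``swap symmetry'' $\phi$ is exactly that conjugation. The only difference is that your labels $a,b$ are interchanged relative to the paper's, which is immaterial since the hypotheses (\ref{firstab}), (\ref{secab}) are symmetric under swapping $a$ and $b$ and the conclusion $ba=1$ is equivalent to $ab=1$.
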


\begin{proof}
In the light of (\ref{idenholdsinQ28}) we know that any relation satisfied by $x,y$ in $\mathcal{E}_{n,3}$, is also satisfied in $\mathcal{P}$.  It remains to show that in the group presented by $\mathcal{E}_{n,3}$, the following identity holds:$$ y=xyx$$
Let $a=y^{-1}xy, b=x$.  From the second relation in $\mathcal{E}_{n,3}$ we have that $ab^2=b^3a^2$.  As $y^2=x^n$ we know that $y^2$ is central and conjugating the second relation in $\mathcal{E}_{n,3}$ by $y$, we get $ba^2=a^3b^2$.  Thus Lemma \ref{abr=3} tells us that $ba=1$.  That is $xy^{-1}xy=1$ and $xyx=y$. 
\end{proof}

Let $\mathcal{P}'$ denote  $\mathcal{E}_{7,3}$.  The remainder of this article will be devoted to showing that $\pi_2(X_{\mathcal{P}'})\not\cong \IQ$, in the case $n=7$.  However we briefly pause to consider other possible presentations $\mathcal{E}_{n,r}$ for $Q_{4n}$.  Computations in Magma \cite{Bosm} suggest that $\mathcal{E}_{n,r}$ is frequently a presentation of $Q_{4n}$.  This has been the case for every value of $n,r$ that we have tried where either $r \not \equiv 2 \mod 3$, or $3 \not\hspace{1mm}\mid n$.  We provide one further result in that direction.

\begin{lemma} \label{abr=2}
Let $a,b \in G$ for some group $G$ satisfy{\rm:} \begin{eqnarray*} ab&=&b^2a^2,\label{firstabr2}\\ba&=&a^2b^2, \label{secabr2}
\\a^n&=&b^n,\end{eqnarray*} where $3\not\hspace{1mm}\mid n$.  Then $ba=1$.
\end{lemma}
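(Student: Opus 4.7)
The plan is to first show that $a^3$ commutes with $b$ and $b^3$ with $a$, so that $a^3$ and $b^3$ lie in the centre of $\langle a,b\rangle$; this will let us finish with a short case analysis driven by $a^n = b^n$ and $3 \nmid n$. The commutation relations themselves will follow from the identity $(ab)^2 = (ba)^2$.

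Write $u = ab$ and $v = ba$. The first step is to establish the collapse identities $vuv = u$ and $uvu = v$:
\[
vuv = (ba)(ab)(ba) = b\,(a^2 b^2)\,a = b\,(ba)\,a = b^2 a^2 = ab = u,
\]
using $a^2 b^2 = ba$ and then $b^2 a^2 = ab$; the computation for $uvu = v$ is symmetric. These rearrange to $vu = uv^{-1}$ and $uv = vu^{-1}$, and a short chain of computations then yields $u^2 = v^2$ and $u^4 = v^4 = 1$: from $(vu)(uv) = u \cdot u^{-1} = 1$ we get $u^2 = v^{-2}$, while the two expansions $(uv)^2 = u(vuv) = u^2$ and $(uv)^2 = (vu^{-1})^2 = u^{-2}$ give $u^4 = 1$. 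Reading $u^2 = v^2$ as $abab = baba$, we see that $a$ commutes with $bab$, which equals $a^2 b^3$ by the second hypothesis, and hence $a$ commutes with $b^3$; by the $a \leftrightarrow b$ symmetry of the hypotheses, $b$ commutes with $a^3$.

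With this centrality in hand, write $n = 3q + r$ with $r \in \{1,2\}$. Since $(a^3)^q$ and $(b^3)^q$ are central in $\langle a, b\rangle$, the relation $a^n = b^n$ simplifies to the assertion that $a^r b^{-r}$ is central. When $r = 1$, centrality of $ab^{-1}$ means it commutes with $b$, giving $a = bab^{-1}$; the second hypothesis rewrites $bab^{-1} = a^2 b$, so $b = a^{-1}$ and $ba = 1$. When $r = 2$, centrality of $a^2 b^{-2}$ forces $[a, b^2] = 1$, which combined with $[a, b^3] = 1$ yields $[a, b] = 1$; the first hypothesis then collapses to $ab = (ab)^2$, whence $ab = 1$ and again $ba = 1$.

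The main obstacle is establishing $(ab)^2 = (ba)^2$: direct squaring via the two hypotheses gives $(ab)^2 = b^3 a^3$ and $(ba)^2 = a^3 b^3$, so the desired identity is equivalent to $[a^3, b^3] = 1$---essentially part of what we are trying to prove. The route through $vuv = u$ and $uvu = v$ circumvents this circularity by exploiting the precise occurrence of $a^2 b^2$ and $b^2 a^2$ in the hypotheses to telescope those strings in just two rewrites.
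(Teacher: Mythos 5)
Your proof is correct, but it reaches the shared waypoint --- centrality of $a^3$ and $b^3$ --- by a much longer road than the paper, and it also finishes differently. The paper telescopes directly: $a^3b = a^2(ab) = a^2b^2a^2 = (ba)a^2 = ba^3$, so $a^3$ commutes with $b$ in one line, with no need for the detour through $u=ab$, $v=ba$, the identities $vuv=u$, $uvu=v$, and $u^2=v^2$ (all of which do check out). Your closing remark about the circularity of $(ab)^2=(ba)^2$ is therefore something of a red herring: the same two-rewrite telescoping you perform inside $vuv$ applies to the string $a^2(ab)$ itself and yields $a^3b=ba^3$ immediately, so no square identity is needed. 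The endgame also differs: rather than splitting into the cases $n\equiv 1,2 \pmod 3$, the paper observes that $a^n=b^n$ commutes with $b$ (and trivially with $a$), so $a^n$ is central alongside $a^3$; since $\gcd(3,n)=1$, writing $a=(a^3)^s(a^n)^t$ shows $a$ itself is central, after which either hypothesis collapses to $ab=(ab)^2$ and $ba=ab=1$. Your case analysis is sound --- the deductions $a=a^2b$ when $r=1$ and $[a,b]=1$ when $r=2$ both verify --- but the coprimality argument subsumes both cases at once. What your route buys, beyond the proof itself, is the extra information $u^4=v^4=1$, which is not needed here.
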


\begin{proof}
We have $a^3b=a^2(ab)=a^2b^2a^2=(ba)a^2=ba^3$.  Thus $a^3$ is central and so is $a^n$.  As $3,n$ are coprime we have that $a$ is central.  Thus $ba=1$ follows from either of the first two equations.
\end{proof}

\begin{lemma}
The presentation $\mathcal{E}_{n,2}$ presents $Q_{4n}$ for all $n \geq 2$ with  $3\not\hspace{1mm}\mid n$ .
\end{lemma}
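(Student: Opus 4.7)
The plan is to mirror the strategy used in the proof of Lemma \ref{3isgood}, but now appealing to Lemma \ref{abr=2} instead of Lemma \ref{abr=r=3}. Working in the group $G$ presented by $\mathcal{E}_{n,2}$, I would set $a = y^{-1}xy$ and $b = x$. The second relator, read off with $r=2$, immediately gives $ab = b^2 a^2$, which is the first hypothesis of Lemma \ref{abr=2}.

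For the symmetric hypothesis $ba = a^2 b^2$, I would exploit the fact that $y^2 = x^n$ is central in $G$. A short calculation shows that conjugation by $y$ swaps $a$ and $b$: directly, $y a y^{-1} = y(y^{-1}xy)y^{-1} = x = b$, while centrality of $y^2$ rearranges to $yxy^{-1} = y^{-1}xy = a$, so $y b y^{-1} = a$. Conjugating the identity $ab = b^2a^2$ by $y$ therefore yields $ba = a^2 b^2$, as required.

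For the third hypothesis, $a^n = b^n$: since $a = y^{-1}xy$, we have $a^n = y^{-1}x^n y = y^{-1}y^2 y = y^2 = x^n = b^n$. With the coprimality assumption $3 \nmid n$ in place, Lemma \ref{abr=2} then forces $ba = 1$, i.e., $xy^{-1}xy = 1$, which rearranges to $xyx = y$. Thus the standard relator of $\mathcal{P}$ holds in $G$, so the natural surjection $G \twoheadrightarrow Q_{4n}$ (noted in the excerpt at (\ref{idenholdsinQ28})) admits an inverse arising from the presentation $\mathcal{P}$, and $G \cong Q_{4n}$.

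There is no really hard step here: once one spots that centrality of $y^2$ makes conjugation by $y$ interchange $a$ and $b$, everything else is formal, and the only delicate point is ensuring that $3 \nmid n$ is used exactly where Lemma \ref{abr=2} requires it. The main thing to be careful about is simply that the equation $a^n = b^n$ really does follow from $y^2 = x^n$ and not from some stronger relation one might be tempted to invoke.
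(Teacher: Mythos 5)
Your proof is correct and follows essentially the same route as the paper's: set $a=y^{-1}xy$, $b=x$, read off $ab=b^2a^2$ from the second relation, use centrality of $y^2$ to conjugate by $y$ and obtain $ba=a^2b^2$, note $a^n=b^n$, and invoke Lemma~\ref{abr=2}. The only difference is that you spell out the conjugation computation explicitly, which the paper leaves implicit.
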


\begin{proof}
Again we need only show that: $$ y=xyx$$ holds in the group with presentation $\mathcal{E}_{n,2}$.  Again let $a=y^{-1}xy, b=x$.  From the second relation in $\mathcal{E}_{n,2}$ we have that $ab=b^2a^2$.  As $y^2=x^n$ we know that $y^2$ is central and conjugating the second relation in $\mathcal{E}_{n,2}$ by $y$, we get $ba=a^2b^2$.  Clearly $a^n=b^n$, so Lemma \ref{abr=2} tells us that $ba=1$.  That is $xy^{-1}xy=1$ and $xyx=y$. 
\end{proof}

\section {Computing $\pi_2(X_{\mathcal{P}'})$}
From now on we fix $n=7$ and we wish to show that $\pi_2(X_{\mathcal{P}'})\not\cong \pi_2(X_{\mathcal{P}})$.  In this section we will describe $\pi_2(X_{\mathcal{P}'})$ as a submodule of $\IQ$ with explicit generators.  Then in \S\ref{Milnor} we will decompose $\pi_2(X_{\mathcal{P}'})$ via Milnor squares, to show that it is indeed not $\pi_2(X_{\mathcal{P}})$.  

First note that multiplying both sides of a relation by the same generator on the same side does not alter the homotopy type of the associated Cayley complex.

Thus replacing the second relation in $\mathcal{P}'$ with any of the following, results in homotopy equivalent Cayley complexes:

\begin{align*}
y^{-1}xyx^2&=x^3y^{-1}x^2y,&{\rm Original \,\, relation}\\
x^{-3}(y^{-1}xyx)x^3 x^{-2}&=y^{-1}x^2y,& {\rm Multiplying\,\, on\,\, left\,\, by\,\,} x^{-3} \\
x^{-3}(y^{-1}xyx)x^3 &=y^{-1}x^2yx^2,&{\rm Multiplying\,\, on\,\, right\,\, by\,\,} x^{2} \\
x^{-3}(y^{-1}xyx)x^3 &=(y^{-1}xyx)(x^{-1}(y^{-1}xyx)x).& {\rm Rebracketing} \\
\end{align*}

Now let $R$ denote the word $y^{-1}xyx$.  The last relation then becomes $x^{-3}Rx^3=R(x^{-1}Rx)$.

Then $\pi_2(X_{\mathcal{P}'})$ may be identified with the kernel of the boundary map $\partial_2'$ associated to the presentation:
$$
 \langle x,y\,\vert\, y^2=x^7,\,\, R(x^{-1}Rx)=x^{-3}Rx^3\rangle.
$$

Let $\vec{F_1}, \vec{F_2}$ denote the generators corresponding to these two relations.  

For a general group presentation containing relators $R_1,\cdots, R_n$, integers $s_1,\cdots,s_n$, words $w_1,\cdots, w_n$ in the generators, and a generator $t$, we have:
$$
\partial_t\left((w_1^{-1}R_1^{s_1}w_1)\cdots(w_n^{-1}R_n^{s_n}w_n)\right)= (\partial_t R_1)s_1w_1 +\cdots+ (\partial_t R_n)s_nw_n
$$
Thus we have:
\begin{eqnarray*}
\partial_x\left((x^{-1}R^{-1}x)R^{-1}(x^{-3}Rx^3)\right)&=&(\partial_x R) (x^3-x-1),\\
\partial_y\left((x^{-1}R^{-1}x)R^{-1}(x^{-3}Rx^3)\right)&=&(\partial_y R) (x^3-x-1).
\end{eqnarray*}

We may describe $\partial_2'$ explicitly:
\begin{align*}
\partial_2'\colon \vec{F_1} &\mapsto \partial_2\vec{E_1},\\
\partial_2'\colon \vec{F_2} &\mapsto \partial_2\vec{E_2}(x^3-x-1).
\end{align*}
Thus given any $\vec{F_1}a+\vec{F_2}b \in {\rm ker}(\partial_2')$, for $a,b \in \ZQ$, we have: \begin{align}\vec{E_1}a+\vec{E_2}(x^3-x-1)b = \vec{u}\gamma,\label{pi2cond}\end{align} for some unique $\gamma \in \IQ$.

We will show that the right annihilator of $x^3-x-1$ is $\{0\}$, so in fact $\gamma$ determines $a,b$.  

\begin{lemma} \label{quarticspan}
In the ring $\mathbb{Z}[x]/(x^{14}-1)$, the ideal generated by $x^3-x-1$ contains $4$.
\end{lemma}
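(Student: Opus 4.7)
The plan is to reduce the statement to a calculation in the ring $\mathbb{Z}[\alpha] = \mathbb{Z}[x]/(x^3-x-1)$, where $\alpha$ satisfies $\alpha^3 = \alpha + 1$. Producing a $p(x) \in \mathbb{Z}[x]/(x^{14}-1)$ with $(x^3-x-1)\,p(x) = 4$ is equivalent to producing a Bezout-style identity
\[
4 = a(x)(x^3-x-1) + b(x)(x^{14}-1) \qquad \text{in } \mathbb{Z}[x],
\]
which in turn amounts to showing that $4$ lies in the principal ideal $(\alpha^{14}-1) \subseteq \mathbb{Z}[\alpha]$. So I would first set up this reduction, then work entirely in $\mathbb{Z}[\alpha]$.

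The key step is to expand $\alpha^{14}$ in the basis $\{1, \alpha, \alpha^2\}$ via the recursion $\alpha^{k+3} = \alpha^{k+1} + \alpha^{k}$ forced by $\alpha^3 = \alpha + 1$. I expect the coefficients to land in a configuration exhibiting a factor of $4$; a mechanical iteration in fact yields $\alpha^{14} = 12\alpha^2 + 16\alpha + 9$, so that
\[
\alpha^{14} - 1 = 4(3\alpha^2 + 4\alpha + 2),
\]
and inspection of the lower powers shows $3\alpha^2 + 4\alpha + 2 = \alpha^9$, giving the compact identity $\alpha^{14} - 1 = 4\alpha^9$ in $\mathbb{Z}[\alpha]$.

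It then remains to observe that $\alpha$ is a unit in $\mathbb{Z}[\alpha]$, since $\alpha(\alpha^2 - 1) = \alpha^3 - \alpha = 1$; so $(\alpha^2-1)^9 \in \mathbb{Z}[\alpha]$ is an explicit integer-polynomial inverse for $\alpha^9$, and multiplying $\alpha^{14}-1$ by it yields the identity $4 = (\alpha^{14}-1)(\alpha^2-1)^9$ in $\mathbb{Z}[\alpha]$. Lifting the resulting equation back to $\mathbb{Z}[x]$ and reducing modulo $x^{14}-1$ produces the desired element of the ideal $(x^3-x-1)$. The only real obstacle is the bookkeeping in expanding $\alpha^{14}$ — entirely routine once one trusts that the factor of $4$ will appear.
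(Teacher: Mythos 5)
Your proposal is correct and is essentially the paper's argument: both divide $x^{14}-1$ by $x^3-x-1$ to obtain the remainder $12x^2+16x+8$ and then multiply by a suitable polynomial to extract $4$. Your packaging of the last step — recognising the cofactor $3\alpha^2+4\alpha+2$ as the unit $\alpha^9$ with explicit inverse $(\alpha^2-1)^9$ — is a slightly cleaner way of doing what the paper achieves by the ad hoc combination $\alpha_3+\alpha_2-3\alpha_1=4$, but the underlying computation is the same.
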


\begin{proof}
Dividing $x^{14}-1$ by $x^3-x-1$ leaves a remainder of \begin{eqnarray*}\alpha_1&=&12x^2+16x+8.\\ \\{\rm Let \hspace{4.8cm}} &\,\,&\\ \alpha_2&=&\alpha_1x - (x^3-x-1)12=16x^2+20x+12,\hspace{4.8cm}\\\alpha_3&=&\alpha_2x-(x^3-x-1)16=20x^2+28x+16.\end{eqnarray*}
Thus $\alpha_1,\alpha_2,\alpha_3$ are divisible by $x^3-x-1$ in the ring $\mathbb{Z}[x]/(x^{14}-1)$.  Finally note $\alpha_3+\alpha_2-\alpha_13=4.$
\end{proof}

Thus we have an element $p\in \mathbb{Z}[x]/(x^{14}-1)\subset \ZQ$ satisfying $p(x^3-x-1)=4$.  If $(x^3-x-1)b=0$ for some $b \in \ZQ$ then $p(x^3-x-1)b=0$ so $4b=0$ and $b=0$.  Thus we can conclude that the right annihilator of $x^3-x-1$ is indeed \{0\}.

\begin{lemma}\label{pi2isker}
We have 
 $$
\pi_2(X_{\mathcal{P}'})\cong\{\gamma \in \IQ |\,\,\exists b\in \ZQ |\,\,(1-yx) \gamma = (x^3-x-1)b\}.$$
In other words, we have that $\pi_2(X_{\mathcal{P}'})$ is the kernel of the homomorphism $$\IQ \to \ZQ/(x^3-x-1)\ZQ,$$ mapping $1\mapsto 1-yx$.
\end{lemma}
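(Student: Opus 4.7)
The plan is to leverage the description of $\pi_2(X_{\mathcal{P}}) = \ker(\partial_2) = \vec{u}\ZQ \cong \IQ$ from Section \ref{stan} together with the explicit formulas for $\partial_2'$ just derived. Since $\partial_2'(\vec{F_1}) = \partial_2(\vec{E_1})$ and $\partial_2'(\vec{F_2}) = \partial_2(\vec{E_2})(x^3-x-1)$, the $\ZQ$-linear map $\Psi\colon C_2(\widetilde{X_{\mathcal{P}'}}) \to C_2(\widetilde{X_{\mathcal{P}}})$ defined by $\vec{F_1} \mapsto \vec{E_1}$ and $\vec{F_2} \mapsto \vec{E_2}(x^3-x-1)$ satisfies $\partial_2 \circ \Psi = \partial_2'$, and so sends $\ker(\partial_2')$ into $\ker(\partial_2) = \vec{u}\ZQ$. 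Composing with $\vec{u}\gamma \mapsto \gamma$ yields a $\ZQ$-linear map $\Phi\colon \pi_2(X_{\mathcal{P}'}) \to \IQ$, and the goal becomes to show that $\Phi$ is injective with image the claimed submodule.

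For injectivity, suppose $\Phi(\vec{F_1}a + \vec{F_2}b) = 0$. Then $\vec{E_1}a + \vec{E_2}(x^3-x-1)b = 0$ in $C_2(\widetilde{X_{\mathcal{P}}})$, so immediately $a = 0$ and $(x^3-x-1)b = 0$. Lemma \ref{quarticspan} furnishes $p \in \ZQ$ with $p(x^3-x-1) = 4$, so $4b = 0$ in $\ZQ$ and therefore $b = 0$.

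For the image, note that $\gamma \in \IQ$ lies in $\operatorname{Im}\Phi$ iff $\vec{u}\gamma = \vec{E_1}(x-1)\gamma + \vec{E_2}(1-yx)\gamma$ is in the image of $\Psi$, i.e., iff there exists $b \in \ZQ$ with $(x^3-x-1)b = (1-yx)\gamma$; the companion coefficient is then forced to be $a = (x-1)\gamma$. The ambiguity in lifting $\gamma$ from $\IQ$ to $\ZQ$ washes out because $(x-1)\Sigma_G = 0$ and, as just observed, $b$ is determined by $\gamma$. This is exactly the condition that $\gamma$ lies in the kernel of the multiplication-by-$(1-yx)$ map $\IQ \to \ZQ/(x^3-x-1)\ZQ$, which is itself well-defined since $(1-yx)\Sigma_G = \Sigma_G - yx\Sigma_G = 0$. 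The only real obstacle is bookkeeping lifts between $\ZQ$ and $\IQ$; the essential algebraic input beyond the setup is Lemma \ref{quarticspan}, used both to annihilate the right annihilator of $x^3-x-1$ (giving injectivity) and implicitly to guarantee that $b$ is uniquely recovered from $\gamma$ in the image computation.
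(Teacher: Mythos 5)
Your proof is correct and follows essentially the same route as the paper: both identify $\ker(\partial_2')$ with the pairs $(a,b)$ satisfying $\vec{E_1}a+\vec{E_2}(x^3-x-1)b=\vec{u}\gamma$, read off $a=(x-1)\gamma$ and $(x^3-x-1)b=(1-yx)\gamma$ from the free generators $\vec{E_1},\vec{E_2}$, and use the vanishing right annihilator of $x^3-x-1$ (via Lemma \ref{quarticspan}) to recover $b$ from $\gamma$. Your packaging of the comparison as an explicit chain map $\Psi$ with $\partial_2\circ\Psi=\partial_2'$ is just a slightly more formal presentation of the same argument.
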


\begin{proof}
We have identified $\pi_2(X_{\mathcal{P}'})$ with the kernel of $\partial_2'$, which consists of elements $\vec{F_1}a+\vec{F_2}b$, with $a,b \in \ZQ$ satisfying (\ref{pi2cond}), for some $\gamma \in \IQ$.  From (\ref{stanpi2}) we know that if this condition is satisfied for some $\gamma$, then it is unique.  

Conversely given $\gamma$ satisfying (\ref{pi2cond}), for some $a,b \in \ZQ$, we know that
\begin{align}
a&=(x-1)\gamma,\\
(x^3-x-1)b&=(1-yx) \gamma. \label{gammacond}
\end{align}
As the right annihilator of $(x^3-x-1)$ is $\{0\}$, we know that there is a unique $\vec{F_1}a+\vec{F_2}b\in \pi_2(X_{\mathcal{P}'})$, for which $a,b$ satisfy (\ref{pi2cond}).

Thus $\pi_2(X_{\mathcal{P}'})$ may be identified with the set of $\gamma \in \IQ$, satisfying (\ref{gammacond}), for some $b \in \ZQ$.
\end{proof}

We next seek to better understand the module: $$M= \ZQ/(x^3-x-1)\ZQ.$$

From Lemma \ref{quarticspan} we know that any element of $M$ may be written in the form $a_0+a_1x+a_2x^2+(a_3+a_4x+a_5x^2)y$, with the $a_i\in \{0,1,2,3\}$.  Let $A=\mathbb{Z}_4[x]/(x^3-x-1)$.  Note that in $A$, we have $x(x^2-1)=1$, so $x$ is invertible.

\begin{lemma}
We have a well defined $\ZQ$ module $A \oplus A$ with $\ZQ$ action given by{\rm:} \begin{eqnarray*} (a,b)y&=&(bx^7,a)\\(a,b)x &=&(ax,bx^{-1})\end{eqnarray*} for all $a,b\in A$.
\end{lemma}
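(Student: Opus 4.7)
The plan is to verify that the two displayed formulas assemble into a well-defined right $\ZQ$-module structure on $A\oplus A$. Since $Q_{28}$ is presented by $\langle x,y\mid y^2=x^7,\ xyx=y\rangle$ and each formula is manifestly $\mathbb{Z}$-linear in $(a,b)$, it suffices to check (i) that the $x$- and $y$-actions are bijective (so they really come from group elements), and (ii) that the induced actions of the two sides of each defining relation of $Q_{28}$ agree.

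The one nontrivial ingredient is the identity $x^{14}=1$ in $A$. I would extract this directly from Lemma~\ref{quarticspan}: the residue $\alpha_1=12x^2+16x+8$ computed there by dividing $x^{14}-1$ by $x^3-x-1$ over $\mathbb{Z}[x]$ is divisible by $4$, so reducing modulo $4$ gives $x^{14}\equiv 1$ in $\mathbb{Z}_4[x]/(x^3-x-1)=A$. This is where the modulus $4$ in the definition of $A$ is essential.

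Granted $x^{14}=1$, induction on $k$ yields $(a,b)\cdot x^k=(ax^k,bx^{-k})$ for all integers $k$, and in particular $(a,b)\cdot x^7=(ax^7,bx^7)$. On the other hand, $(a,b)\cdot y^2 = (bx^7,a)\cdot y = (ax^7,bx^7)$, matching. For the relation $xyx=y$, one computes directly that $(a,b)\cdot x\cdot y\cdot x = (ax,bx^{-1})\cdot y\cdot x = (bx^6,ax)\cdot x = (bx^7,a) = (a,b)\cdot y$.

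Finally, for bijectivity, the $x$-action is invertible because $x$ is a unit in $A$ (as noted just before the statement), and the $y$-action admits the evident inverse $(c,d)\mapsto(d,cx^{-7})$. Hence both generators act by $\mathbb{Z}$-linear automorphisms, and the two relation checks above promote this to a well-defined group homomorphism $Q_{28}\to\mathrm{Aut}_{\mathbb{Z}}(A\oplus A)$, equivalently a right $\ZQ$-module structure on $A\oplus A$. The only real content is the mod-$4$ reduction that yields $x^{14}=1$; everything after that is direct substitution.
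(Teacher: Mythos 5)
Your proposal is correct and follows essentially the same route as the paper: both arguments reduce the problem to checking the relations $y^2=x^7$ and $xyx=y$ by direct substitution, with the only substantive point being that $x^{14}=1$ in $A$, which both you and the paper extract from the division of $x^{14}-1$ by $x^3-x-1$ leaving a remainder divisible by $4$. Your explicit verification of the relation checks and of invertibility of the generator actions just spells out what the paper leaves as "direct application".
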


\begin{proof}
For any $\mathbb{Z}[x]/(x^{14}-1)$ module $A'$, the above defines a $\ZQ$ action on $A' \oplus A'$ as direct application of $x^7, y^2, xyx, y$ demonstrates that the given action respects the identities $x^7=y^2, xyx=y$.  It thus suffices to show that $x^{14}$ acts trivially on $A$.  We may verify this immediately by recalling from the proof of Lemma \ref{quarticspan} that: $$x^{14}-1=(x^3-x-1)q+4(3x^2+4x+2)$$
for some polynomial $q$ in $x$ with integer coefficients.
\end{proof}

\begin{lemma}
We have an isomorphism of $\ZQ$ modules $M \cong A \oplus A$. \end{lemma}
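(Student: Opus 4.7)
The plan is to construct the isomorphism directly by producing a surjective right $\ZQ$-module homomorphism $\tilde{\phi} \colon \ZQ \to A \oplus A$ whose kernel is precisely $(x^3-x-1)\ZQ$; passing to the quotient then yields the desired $M \cong A\oplus A$. Since $A\oplus A$ is already a right $\ZQ$-module (by the preceding lemma), such a map is determined by choosing the image of $1$, and the natural choice is $\tilde{\phi}(1)=(1,0)$.

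To compute $\tilde{\phi}$ explicitly, I would use the $\mathbb{Z}$-basis $\{x^i, x^iy : 0\le i < 14\}$ of $\ZQ$ to write every element uniquely as $\alpha + \beta y$ with $\alpha,\beta \in \mathbb{Z}[x]/(x^{14}-1)$. Unwinding the defined actions $(a,b)x = (ax,bx^{-1})$ and $(a,b)y = (bx^7, a)$, one finds that $(1,0)\cdot x^i = (x^i,0)$ and $(1,0)\cdot x^iy = (0, x^i)$, so
\[
\tilde{\phi}(\alpha + \beta y) \;=\; (\bar{\alpha},\bar{\beta}),
\]
where $\bar{\,\cdot\,}$ denotes the natural projection $\mathbb{Z}[x]/(x^{14}-1) \twoheadrightarrow A$. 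Surjectivity is then immediate.

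For the kernel, $\tilde{\phi}(\alpha + \beta y) = 0$ iff each of $\alpha,\beta$ lies in the kernel of the projection to $A$, i.e.\ in the ideal $(x^3-x-1,\,4) \subset \mathbb{Z}[x]/(x^{14}-1)$. By Lemma \ref{quarticspan}, $4$ lies in the ideal generated by $x^3-x-1$ in that ring, so this ideal collapses to $(x^3-x-1)$. Writing $\alpha = (x^3-x-1)\alpha'$ and $\beta = (x^3-x-1)\beta'$ gives $\alpha+\beta y = (x^3-x-1)(\alpha' + \beta' y) \in (x^3-x-1)\ZQ$, and the reverse inclusion is obvious from the module structure. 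Hence $\ker\tilde{\phi} = (x^3-x-1)\ZQ$, which finishes the proof.

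There is no significant obstacle here, as the work of setting up the correct $\ZQ$-action on $A\oplus A$ was already done in the previous lemma, and the key divisibility input is Lemma \ref{quarticspan}. The only step requiring care is verifying that the action formulas on $A\oplus A$ really do match the multiplication in $\ZQ$ under the decomposition $\alpha+\beta y$ — in particular that pushing $y$ past $x$ via $yx = x^{-1}y$ in $Q_{28}$ corresponds precisely to the $x^{-1}$ appearing on the second coordinate of $(a,b)x$.
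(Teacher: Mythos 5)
Your proof is correct and follows essentially the same route as the paper, which simply exhibits the mutually inverse maps $(a,b)\mapsto a+by$ and $1\mapsto(1,0)$; you have spelled out the kernel computation (using Lemma \ref{quarticspan} to collapse $(4,\,x^3-x-1)$ to $(x^3-x-1)$) that the paper leaves implicit. No gaps.
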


\begin{proof}
The homomorphism $A \oplus A \to M$ mapping $(a,b)\mapsto a+by$ has inverse $M \to A \oplus A$, mapping $1\mapsto (1,0)$.
\end{proof}

Lemma \ref{pi2isker} identifies $\pi_2(X_{\mathcal{P}'})$ with the kernel of the map $\psi\colon \IQ \to M$, mapping $1 \mapsto 1-yx$.  Let \begin{eqnarray*}\phi_1&=&x^6+x^5-x^4-3x^3-x^2 +x+1,\\ \phi_2&=&2+2x-x^3+x^3y.\end{eqnarray*}

\begin{lemma}
We have $4, \phi_1,\phi_2\in \pi_2(X_{\mathcal{P}'})$.
\end{lemma}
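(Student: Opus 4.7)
By Lemma \ref{pi2isker}, $\pi_2(X_{\mathcal{P}'})$ identifies with the set of $\gamma \in \IQ$ such that $(1-yx)\gamma \in (x^3-x-1)\ZQ$, so it suffices to verify this membership for each of $\gamma \in \{4, \phi_1, \phi_2\}$ by direct computation in $\ZQ$. The structural observation I would use is the decomposition $\ZQ = \mathbb{Z}[x]/(x^{14}-1) \oplus y\cdot\mathbb{Z}[x]/(x^{14}-1)$ together with the identity $(x^3-x-1)y = y(x^{-3}-x^{-1}-1)$, which follows from $xy = yx^{-1}$ in $Q_{28}$. Consequently $(x^3-x-1)\ZQ$ is precisely the set of elements of the form $(x^3-x-1)P(x) + y(x^{-3}-x^{-1}-1)Q(x)$ with $P, Q \in \mathbb{Z}[x]/(x^{14}-1)$, and the task for each $\gamma$ reduces to matching the two parts.

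The case $\gamma = 4$ is immediate from Lemma \ref{quarticspan}: writing $4 = (x^3-x-1)p$ gives $(1-yx)\cdot 4 = (x^3-x-1)\cdot p(1-yx)$. For $\gamma = \phi_1$, I would first record the polynomial factorisation $\phi_1 = (x^3-x-1)(x^3+x^2-1)$, verified by expansion. Then $(1-yx)\phi_1 = \phi_1 - y(x\phi_1)$, whose $P$-part is already $(x^3-x-1)(x^3+x^2-1)$. For the $y$-part, an explicit multiplication shows that $(x^{-3}-x^{-1}-1)\cdot x^4(x^3-x-1) = -x\phi_1$, so the $y$-part has the required shape with $Q = x^4(x^3-x-1)$.

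The case $\gamma = \phi_2$ requires more care because of non-commutativity. Rewriting $\phi_2$ in the form $P + yQ$ via $x^3y = yx^{-3} = yx^{11}$, and using $(yx)(y) = y^2 x^{-1} = x^6$ to handle $(yx)\phi_2$, I would arrive at
$$(1-yx)\phi_2 = -2(x^3-x-1) + y\bigl(x^{11} + x^4 - 2x^2 - 2x\bigr).$$
It then remains to find $Q_0 \in \mathbb{Z}[x]/(x^{14}-1)$ with $(x^{-3}-x^{-1}-1)Q_0 = x^{11}+x^4-2x^2-2x$. Multiplying through by $-x^3$ converts this into the polynomial equation $(x^3+x^2-1)Q_0 = -x^7 + 2x^5 + 2x^4 - 1$; a short polynomial long division yields $Q_0 = -x^4+x^3+x^2+1$, which I would then confirm by direct expansion.

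The main obstacle is the $\phi_2$ case, where one must simultaneously keep track of the non-commutative identities $x^iy = yx^{-i}$ and $y^2 = x^7$ and spot the explicit $Q_0$ via polynomial long division. Once $Q_0$ is in hand the verification collapses to expanding a product of short polynomials modulo $x^{14}-1$.
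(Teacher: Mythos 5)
Your proposal is correct and takes essentially the same route as the paper: both verify, via Lemma \ref{pi2isker}, that $(1-yx)\gamma$ lies in $(x^3-x-1)\ZQ$ for each of $4$, $\phi_1$, $\phi_2$ by exhibiting explicit witnesses, and your factorisation $\phi_1=(x^3-x-1)(x^3+x^2-1)$ and your identity for $(1-yx)\phi_2$ are equivalent to the paper's $\phi_1=-(x^3-x-1)(x^{-3}-x^{-1}-1)x^3$ and $(1-yx)\phi_2=(x^3-x-1)(-2+(x^4+x^2+x-1)x^{-4}y)$. Your componentwise decomposition $\ZQ=\mathbb{Z}[x]/(x^{14}-1)\oplus y\,\mathbb{Z}[x]/(x^{14}-1)$ just organises the same computation more explicitly, and all of your polynomial identities check out.
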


\begin{proof}
Clearly $4\in$ ker $\psi$.  Also $(x^3-x-1)(x^{-3}-x^{-1}-1)$ commutes with $y$ and is divisible by $x^3-x-1$, so it too lies in ker $\psi$. In particular, $\phi_1=-(x^3-x-1)(x^{-3}-x^{-1}-1)x^3$ lies in ker $\psi$.

Finally we note that:
$$(1-yx)\phi_2=(x^3-x-1)(-2+(x^4+x^2+x-1)x^{-4}y).$$
\end{proof}

Our goal in this section is to show that these three elements generate $\pi_2(X_{\mathcal{P}'})$.  To that end we must understand the map $\psi\colon\IQ \to M\cong A \oplus A$.  Firstly, we note the following holds in $A$:

\begin{lemma}\label{x0to5inA}
In $A$ we have\rm{:} $$\begin{array}{lllll}
x^3=x+1,&\hspace{1cm}&x^7=2x^2+2x+1,&\hspace{1cm}&x^{11}=x^2+3x,\\
x^4=x^2+x,&&x^8=2x^2+3x+2,&&x^{12}=3x^2+x+1,\\
x^5=x^2+x+1,&&x^9=3x^2+2,&&x^{13}=x^2+3.\\
x^6=x^2+2x+1,&&x^{10}=x+3,
\end{array}
$$
\end{lemma}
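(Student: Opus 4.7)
The plan is to verify the listed powers by straightforward induction, using only the two defining relations of $A=\mathbb{Z}_4[x]/(x^3-x-1)$, namely $x^3 = x+1$ in $A$ and the fact that coefficients are reduced modulo $4$.

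First I would establish the base case $x^3 = x+1$, which is immediate from the defining ideal $(x^3-x-1)$. Then for each subsequent power $x^{k+1}$ with $3 \leq k \leq 12$, I would multiply the previously computed expression for $x^k = a_2 x^2 + a_1 x + a_0$ by $x$ to obtain $a_2 x^3 + a_1 x^2 + a_0 x$, and then substitute $x^3 = x+1$ to get $(a_1) x^2 + (a_2 + a_0) x + a_2$, finally reducing each coefficient modulo $4$. Done in this order, each step uses only the entry directly above it in the table, so the verification is entirely mechanical.

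For instance, to illustrate the only slightly subtle steps (where the mod $4$ reduction actually does something): from $x^8 = 2x^2 + 3x + 2$ one gets $x^9 = 2x^3 + 3x^2 + 2x = 3x^2 + 4x + 2 \equiv 3x^2 + 2 \pmod 4$; from $x^9 = 3x^2 + 2$ one gets $x^{10} = 3x^3 + 2x = 5x + 3 \equiv x + 3 \pmod 4$; and from $x^{12} = 3x^2 + x + 1$ one gets $x^{13} = 3x^3 + x^2 + x = x^2 + 4x + 3 \equiv x^2 + 3 \pmod 4$. All other entries in the table involve no reduction modulo $4$ at all.

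There is no genuine obstacle here: the lemma is a bookkeeping statement, included in order to streamline the module-theoretic computations in $A \oplus A$ that follow. The only thing to watch is to reduce coefficients modulo $4$ at each stage (rather than, say, at the end) so that the intermediate expressions remain in the normal form $a_2 x^2 + a_1 x + a_0$ with $a_i \in \{0,1,2,3\}$ used in the statement.
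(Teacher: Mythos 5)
Your proposal is correct and is essentially the paper's own proof: the paper states exactly the same recursion (if $x^i = ax^2+bx+c$ then $x^{i+1} = bx^2+(a+c)x+a$ in $A$) and iterates it from $x^3=x+1$. Your sample computations, and your observation that mod-$4$ reduction only intervenes at $x^9$, $x^{10}$ and $x^{13}$, all check out.
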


\begin{proof}
To deduce each identity from the preceding one, we need only note that if $x^i=ax^2+bx+c$ in $A$, then $x^{i+1}=bx^2+(a+c)x+a$ in $A$.
\end{proof}

\begin{lemma}\label{psix0to5}
We have{\rm:}\begin{eqnarray*}
\psi(1)&=&(1,3x^2+1),\\
\psi(x)&=&(x,x^2+3x+3),\\
\psi(x^2)&=&(x^2,3x^2+x),\\
\psi(x^3)&=&(x+1,3x+1),\\
\psi(x^4)&=&(x^2+x,x^2+2),\\
\psi(x^5)&=&(x^2+x+1,2x^2+x+2).\\
\end{eqnarray*}
\end{lemma}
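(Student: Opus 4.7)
The map $\psi$ is defined as the right $\ZQ$-homomorphism $\IQ \to M$ sending $1 \mapsto 1 - yx$, so $\psi(x^i) = (1-yx) \cdot x^i$ computed in $\ZQ$ and then reduced modulo $(x^3-x-1)\ZQ$. The plan is to first massage this expression into the normal form $a + by$ with $a, b \in \ZQ$, and then pass to $A = \mathbb{Z}_4[x]/(x^3-x-1)$ via the isomorphism $M \cong A \oplus A$.

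First I would use the $Q_{28}$ relation $xyx = y$ to derive $yx = x^{-1}y$ in $\ZQ$, so that
\[
\psi(x^i) = x^i - y x^{i+1} = x^i - x^{-(i+1)} y.
\]
Under the identification $a + by \leftrightarrow (a,b)$, this reads $\psi(x^i) = (x^i,\, -x^{-(i+1)}) \in A \oplus A$. The preceding lemma's computation (specifically, the identity $x^{14} - 1 = (x^3-x-1)q + 4(3x^2+4x+2)$) shows that $x^{14} = 1$ in $A$, so $x^{-(i+1)} = x^{13-i}$. With this in hand, each of the six identities reduces to a direct lookup in Lemma \ref{x0to5inA}: for the first coordinate I would read off $x^i$ for $i = 0, \ldots, 5$, and for the second coordinate I would read off $x^{13-i}$ and negate the coefficients modulo $4$. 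For instance, $\psi(x^3) = (x^3, -x^{10}) = (x+1, -(x+3)) = (x+1, 3x+1)$, matching the stated value.

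This is essentially a bookkeeping argument and there is no real obstacle. The one subtlety worth flagging is the passage from $\psi(x^i) = \psi(1) \cdot x^i$, using the right $\ZQ$-action on $A \oplus A$ (which is defined coordinate-wise via $(a,b)x = (ax, bx^{-1})$, mixing the two coordinates when the $y$-generator is applied), to the direct computation $\psi(x^i) = (1-yx)x^i$ in $\ZQ$. These agree because the isomorphism $M \cong A \oplus A$ established in the previous lemma is by construction a $\ZQ$-module isomorphism, so either route produces the same answer; one could alternatively verify $\psi(1) = (1, 3x^2+1)$ by hand (using $x^{-1} = x^2 - 1$ in $A$) and then iterate $\psi(x^{i+1}) = \psi(x^i)\cdot x$ to obtain the remaining entries, as a sanity check on sign conventions.
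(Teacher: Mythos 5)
Your proposal is correct and follows essentially the same route as the paper: write $(1-yx)x^i = x^i - x^{-(i+1)}y$, identify it with $(x^i,-x^{13-i})\in A\oplus A$ using $x^{14}=1$ in $A$, and read off the values from Lemma \ref{x0to5inA}, negating modulo $4$ in the second coordinate. The extra remark about compatibility with the module structure on $A\oplus A$ is a harmless sanity check.
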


\begin{proof}
We note that $(1-yx)x^i \in M$ corresponds to the element $(x^i,-x^{13-i})\in A\oplus A$.  Lemma \ref{x0to5inA} then gives the above expressions.
\end{proof}

\begin{lemma}\label{explicitgen}
The elements $4, \phi_1,\phi_2\in \pi_2(X_{\mathcal{P}'})$ generate $\pi_2(X_{\mathcal{P}'})$ as a right module.
\end{lemma}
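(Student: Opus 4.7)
The plan is to show the reverse inclusion $\ker\psi \subseteq \langle 4, \phi_1, \phi_2\rangle$ by reducing modulo $4$ and then matching finite cardinalities. Since $A$, and hence $M \cong A \oplus A$, is annihilated by $4$, the map $\psi$ factors through $\bar\IQ := \IQ/4\IQ \cong \mathbb{Z}_4[Q_{28}]/\Sigma_G$, inducing $\bar\psi$. Because $4 \in \langle 4,\phi_1,\phi_2\rangle$, it suffices to show that $\ker\bar\psi$ coincides with the right ideal $\bar N$ in $\bar\IQ$ generated by $\bar\phi_1, \bar\phi_2$.

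Next I would derive a general formula for $\psi$. Writing $r = a + by \in \ZQ$ with $a, b \in \mathbb{Z}[x]/(x^{14}-1)$ and using $yx^i = x^{-i}y$ together with $y^2 = x^7$, one obtains
\[
\psi(a+by) \;=\; (a - x^6\sigma(b),\; b - x^{-1}\sigma(a)) \in A \oplus A,
\]
where $\sigma$ is the involution $x \mapsto x^{-1}$. This formula makes it easy to verify surjectivity of $\bar\psi$: it gives $\psi(x^3-x-1) = (0,1)$, while $\psi(y) = (3x^2+2x+3, 1)$, so $\psi(y - (x^3-x-1)) = (3x^2+2x+3, 0)$. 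A short check (for example $(3x^2+2x+3)(2x^2+x+2) = 1$ in $A$) shows $3x^2+2x+3$ is a unit in $A$. Right-multiplying $(3x^2+2x+3, 0)$ by powers of $x$ (which act as $(a,0) \mapsto (ax, 0)$) fills out $A \oplus 0$, and then applying $y$ (which swaps coordinates up to $x^7$) fills out $0 \oplus A$. Hence $\bar\psi$ is surjective, and $|\ker\bar\psi| = |\bar\IQ|/|A\oplus A| = 4^{27}/4^{6} = 4^{21}$.

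The remaining task is to verify $|\bar N| = 4^{21}$ directly. Here the factorization $\phi_1 = (x^3-x-1)(x^3+x^2-1)$ is useful: modulo $x^3-x-1$ the second factor reduces to $x^2+x = x \cdot x^3 = x^4$, a unit in $A$, so $\phi_1 \bar\ZQ$ can be identified with the corresponding right ideal cut out by $(x^3-x-1)$ in $\bar\ZQ/\Sigma_G$. The element $\phi_2 = 2 + 2x - x^3 + x^3 y$ then supplies the complementary, $y$-twisted generators needed to exhaust the kernel. Concretely one sets up the $56 \times 27$ matrix over $\mathbb{Z}_4$ whose rows are the coordinates of $\phi_i \cdot g$ (for $i \in \{1,2\}$ and $g$ ranging over the $28$ elements of $Q_{28}$, modulo $\Sigma_G$), and computes its Smith normal form. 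Combined with the already-established containment $\bar N \subseteq \ker\bar\psi$, equality of cardinalities forces $\bar N = \ker\bar\psi$, and lifting back gives $\langle 4, \phi_1, \phi_2\rangle = \ker\psi$, as required.

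The principal obstacle is the final cardinality computation $|\bar N| = 4^{21}$: this is a substantial $\mathbb{Z}_4$-linear algebra calculation complicated by the non-commutativity of $\ZQ$ and the twisting introduced by $\sigma$ (the interaction between $y$ and the polynomial part of $\phi_2$ makes the generator matrix genuinely mixed). The computation is finite and essentially mechanical, but requires careful bookkeeping to carry out by hand; in practice one would corroborate it with a computer algebra system.
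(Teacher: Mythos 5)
Your setup is sound as far as it goes: the formula $\psi(a+by)=(a-x^6\sigma(b),\,b-x^{-1}\sigma(a))$ (with $\sigma$ your involution $x\mapsto x^{-1}$) is correct, $\psi$ does factor through $\IQ/4\IQ$, surjectivity holds (indeed $(0,1)=\psi(x^3-x-1)$ alone generates $M$ as a right module, since $(0,1)x^{-i}=(0,x^i)$ and $(0,1)y=(x^7,0)$ with $x^7$ a unit in $A$), and the count $|\ker\bar\psi|=4^{27}/4^{6}=4^{21}$ is right. The difficulty is that all of this only re-derives containments and sizes; the entire content of the lemma --- that $\phi_1,\phi_2$ generate a submodule of order $4^{21}$ in $\IQ/4\IQ$ --- is deferred to a $56\times 27$ Smith normal form computation over $\mathbb{Z}_4$ which you do not carry out and explicitly propose to hand to a computer. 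As written this is a reduction of the lemma to an unverified calculation, not a proof; the heuristic remarks about the factorisation $\phi_1=(x^3-x-1)(x^3+x^2-1)$ do not substitute for it (and the identification of $\phi_1\ZQ$ with the ideal cut out by $x^3-x-1$ is not justified: $x^3+x^2-1$ is a unit in $A$ but not in $\ZQ$, so a priori $\phi_1\ZQ$ is a proper submodule of $(x^3-x-1)\ZQ$).

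The missing idea --- and the way the paper makes the computation small enough to do by hand --- is to exploit the shape of the two generators via division with remainder. Since $\phi_2=2+2x-x^3+x^3y$ has $y$-part $x^3y$ with $x^3$ invertible in $\mathbb{Z}[x]/(x^{14}-1)$, subtracting $\phi_2\cdot x^{3}\sigma(g)$ from any $\beta=f(x)+g(x)y\in\ker\psi$ kills the $y$-part; since $\phi_1$ is a monic polynomial of degree $6$ in $x$ alone, further subtraction of right multiples of $\phi_1$ reduces what remains to $\alpha=a_0+a_1x+\dots+a_5x^5\in\ker\psi$. The condition $\psi(\alpha)=0$ is then a $6\times 6$ linear system over $\mathbb{Z}_4$ whose matrix (recorded in Lemma \ref{psix0to5}) is invertible over $\mathbb{Z}_4$, forcing $4\mid a_i$ and hence $\alpha\in 4\IQ$. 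That single $6\times 6$ invertibility check is the only computation remaining, and it is feasible by hand. If you prefer your cardinality route you must actually exhibit the Smith normal form of your $56\times 27$ matrix (or an equivalent rank argument); without that the proof is incomplete at exactly the step the lemma is about.
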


\begin{proof}
From any element of $\pi_2(X_{\mathcal{P}'})$, one may subtract appropriate multiples of $\phi_1,\phi_2$, in order to be left with an element $\alpha\in  \pi_2(X_{\mathcal{P}'})$ of the form: $$\alpha=a_0+a_1x+a_2x^2+a_3x^3+a_4x^4+a_5x^5.$$  with the $a_i \in \mathbb{Z}$.
It will suffice to show that $4\vert a_0,a_1,a_2,a_3,a_4,a_5$.  We have $\psi(\alpha)=0$ which by Lemma \ref{psix0to5} is equivalent to
$$
\begin{array}{c}\left(\begin{array}{cccccc} a_0&a_1&a_2&a_3&a_4&a_5\end{array}\right)\\ \\ \\ \\ \\ \end{array} \left(\begin{array}{cccccc}
0&0&1&3&0&1\\
0&1&0&1&3&3\\
1&0&0&3&1&0\\
0&1&1&0&3&1\\
1&1&0&1&0&2\\
1&1&1&2&1&2
\end{array}\right)=\begin{array}{c}\begin{array}{cccccc}\\ \\ \\ \\ \left(0\right.&0&0&0&0&\left.0\right),\end{array}\\ \\ \\ \\ \\ \end{array}
$$ 
working modulo 4.  To deduce that the $a_i\equiv 0 \mod 4$, it suffices to show that the above matrix is invertible, as a matrix over $\mathbb{Z}_4$.  This follows from elementary row or column reduction over $\mathbb{Z}_4$.
\end{proof}

\section{Milnor square decompositions} \label{Milnor}

Lemma \ref{explicitgen} gives us an explicit generating set for $\pi_2(X_{\mathcal{P}'})$ as a submodule of $\IQ$.  In order to show that this is not isomorphic to $\pi_2(X_{\mathcal{P}})$, we will decompose this submodule via a series of Milnor squares (see for example \cite[Section 2]{Beyl}).

Firstly, let $S$ denote the ring $\ZQ/\ZQ(1+y^2)$.  Then \begin{eqnarray*}\pi_2(X_{\mathcal{P}})\otimes_{\ZQ}S &\cong& \IQ/\IQ(1+y^2)\\ &\cong& \ZQ/(\Sigma_G, 1+y^2)\ZQ \cong S,\end{eqnarray*}
as $\Sigma_G=(1+y^2)(1+x+x^2+x^3+x^4+x^5+x^6)(1+y)$.

Then if $N$ denotes the right $S$ module $\pi_2(X_{\mathcal{P'}})\otimes_{\ZQ}S$ we get:

\begin{lemma}\label{Nneedsfree}
If $N$ is not a rank one free module over $S$, then $$\pi_2(X_{\mathcal{P}'}) \not \cong \pi_2(X_{\mathcal{P}})$$ as $\ZQ$ modules.
\end{lemma}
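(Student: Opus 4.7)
The plan is to prove the contrapositive: assume $\pi_2(X_{\mathcal{P}'}) \cong \pi_2(X_{\mathcal{P}})$ as $\ZQ$ modules, and deduce that $N$ is free of rank one over $S$. Since tensor product is functorial, any $\ZQ$-module isomorphism $\pi_2(X_{\mathcal{P}'}) \cong \pi_2(X_{\mathcal{P}})$ induces an isomorphism of right $S$-modules after applying $-\otimes_{\ZQ}S$.

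Next I would invoke the calculation already carried out immediately before the lemma, namely
$$
\pi_2(X_{\mathcal{P}})\otimes_{\ZQ}S \;\cong\; \IQ/\IQ(1+y^2) \;\cong\; \ZQ/(\Sigma_G,1+y^2)\ZQ \;\cong\; S,
$$
where the final isomorphism uses the factorisation $\Sigma_G=(1+y^2)(1+x+x^2+x^3+x^4+x^5+x^6)(1+y)$, which places $\Sigma_G$ inside the ideal $(1+y^2)\ZQ$. Combining this with the induced isomorphism from the first step gives $N \cong S$ as right $S$-modules, so $N$ is free of rank one.

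There is no real obstacle here, since the lemma is a formal consequence of the preceding computation and the functoriality of $-\otimes_{\ZQ}S$; the substantive content sits in the identification $\pi_2(X_{\mathcal{P}})\otimes_{\ZQ}S \cong S$, which the excerpt already establishes. The only point to be slightly careful about is that the identification of $\pi_2(X_{\mathcal{P}})$ with $\IQ$ comes from \eqref{stanpi2}, so the hypothesised isomorphism and the tensor-product computation refer to consistent module structures. With that noted, contrapositive plus base change completes the proof.
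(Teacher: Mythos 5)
Your argument is correct and is exactly the one the paper intends: the lemma is stated immediately after the computation $\pi_2(X_{\mathcal{P}})\otimes_{\ZQ}S\cong S$ and is left as an immediate consequence of it, via the contrapositive and functoriality of $-\otimes_{\ZQ}S$. Your only addition is to spell out the base-change step and the role of the factorisation of $\Sigma_G$, which is a faithful expansion of what the paper leaves implicit.
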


Note that if $N$ has $\mathbb{Z}$ torsion, then it cannot be a rank one free $S$ module and we would have that $\pi_2(X_{\mathcal{P}'}) \not \cong \pi_2(X_{\mathcal{P}})$ as desired.  Hence for the remainder we only need to consider the case where $N$ is $\mathbb{Z}$ torsion free.

\begin{lemma}
The module $N$ is isomorphic to the right ideal of $S$ generated by $\{4, \phi_1,\phi_2\}$.
\end{lemma}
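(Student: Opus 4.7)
The plan is to consider the short exact sequence of right $\ZQ$-modules
\[ 0 \longrightarrow \pi_2(X_{\mathcal{P}'}) \longrightarrow \IQ \longrightarrow Q \longrightarrow 0, \]
where $Q = \IQ/\pi_2(X_{\mathcal{P}'})$, and to tensor it with $S$ over $\ZQ$. The relevant fragment of the $\mathrm{Tor}$ long exact sequence reads
\[ \mathrm{Tor}_1^{\ZQ}(Q,S) \longrightarrow N \longrightarrow \IQ \otimes_{\ZQ} S \longrightarrow Q \otimes_{\ZQ} S \longrightarrow 0. \]
The calculation performed for $\pi_2(X_{\mathcal{P}})$ at the start of this section (using $\Sigma_G \in (1+y^2)\ZQ$) identifies $\IQ \otimes_{\ZQ} S$ with $S$, sending the class of $1$ to $1$. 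Under this identification the image of $N \to S$ is generated as a right $S$-module by the images of $4, \phi_1, \phi_2$, and by Lemma \ref{explicitgen} these elements generate $\pi_2(X_{\mathcal{P}'})$ as a right $\ZQ$-module. Hence the image is precisely the right ideal $I$ of $S$ featured in the statement, so $N$ surjects onto $I$.

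It remains to show the surjection is injective. To that end I would combine the two preceding lemmas: Lemma \ref{pi2isker} realises $Q$ as a submodule of $M = \ZQ/(x^3-x-1)\ZQ$ via $\psi$, while Lemma \ref{quarticspan} gives $4 \in (x^3-x-1)\ZQ$, so that $4M = 0$ and in particular $4Q = 0$. Since multiplication by $4$ on $Q$ is the zero endomorphism, the same holds on $\mathrm{Tor}_1^{\ZQ}(Q,S)$, and the kernel of $N \twoheadrightarrow I$, being the image of $\mathrm{Tor}_1^{\ZQ}(Q,S)$ in $N$, is annihilated by $4$. Invoking the standing assumption immediately before the lemma that $N$ is $\mathbb{Z}$-torsion free then forces this kernel to vanish, and $N \cong I$ as claimed.

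The main subtle point is the step $4Q = 0 \Rightarrow 4\cdot\mathrm{Tor}_1^{\ZQ}(Q,S) = 0$, together with the matching observation that the reduction made just before the lemma to the $\mathbb{Z}$-torsion-free case is exactly what is needed to absorb the resulting $4$-torsion kernel. Once that pairing is identified, no further computation is required: the generating set for $\pi_2(X_{\mathcal{P}'})$, the identification $\IQ \otimes_{\ZQ} S \cong S$, and the $4$-torsion of $M$ have all been established in the work preceding this lemma.
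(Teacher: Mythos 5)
Your proof is correct and is essentially the paper's argument in homological dress: the kernel of the natural surjection $N \twoheadrightarrow I$ — which the paper identifies directly as $(\pi_2(X_{\mathcal{P}'}) \cap \IQ(1+y^2))/\pi_2(X_{\mathcal{P}'})(1+y^2)$ and you identify as the image of $\mathrm{Tor}_1^{\ZQ}(Q,S)$ — is annihilated by $4$ because $4\cdot\IQ\subseteq\pi_2(X_{\mathcal{P}'})$ (equivalently $4Q=0$), and then vanishes by the standing $\mathbb{Z}$-torsion-freeness assumption on $N$. Both arguments rest on exactly these two facts, so no substantive difference.
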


\begin{proof}
The right ideal of $S$ generated by $\{4, \phi_1,\phi_2\}$ is isomorphic to: $$\pi_2(X_{\mathcal{P}'}) /(\pi_2(X_{\mathcal{P}'}) \cap \IQ(1+y^2)).$$

Thus we must show that if $\beta\in\pi_2(X_{\mathcal{P}'})$ and $\beta\in \IQ(1+y^2)$, then $\beta \in \pi_2(X_{\mathcal{P}'})(1+y^2).$  We know that  if $\beta\in\pi_2(X_{\mathcal{P}'})$ and $\beta\in \IQ(1+y^2)$, then $4\beta \in \pi_2(X_{\mathcal{P}'})(1+y^2)$.  Thus $4\beta$ represents 0 in $N$.  As we have that $N$ is $\mathbb{Z}$ torsion free as an assumption, we can conclude that $\beta$ also represents $0$ in $N$.  Thus $\beta\in \pi_2(X_{\mathcal{P}'})(1+y^2)$.
\end{proof}

From now on $N$ will denote the right ideal $(4, \phi_1,\phi_2)S$.  Let $$\sigma_{-x}=1-x+x^2-x^3+x^4-x^5+x^6.$$ We have a Milnor square decomposition of the ring $S$ \cite[\S2, II]{Beyl}:
\begin{eqnarray}\xymatrix{S \ar[d]\ar[r] & S/\sigma_{-x}\ar[d]\\S/(x+1)\ar[r]&S/(x+1,\,\sigma_{-x})}\label{MSqI}
\end{eqnarray}
where the arrows all denote the natural projections.  We have natural identifications: \begin{eqnarray*}  S/(x+1)&=&\mathbb{Z}[y]/(1+y^2),\\S/(x+1,\,\sigma_{-x})&=&\mathbb{Z}_7[y]/(1+y^2).
\end{eqnarray*}

Let $\Lambda=S/\sigma_{-x}$.  Note that $\mathbb{Z}[x]/\sigma_{-x}$ is the cyclotomic ring of degree 7, which embeds in $\mathbb{C}\subset\mathbb{H}$.  This embedding may be extended to embed $\Lambda$ in $\mathbb{H}$. In particular $\Lambda$ contains no zero divisors.  Similarly, the Gaussian integers $\mathbb{Z}[y]/(1+y^2)$ embed in $\mathbb{C}$ and contain no zero divisors.  The ring $\mathbb{Z}_7[y]/(1+y^2)$ is just the field of order 49.  We may rewrite (\ref{MSqI}):
\begin{eqnarray*}\xymatrix{S \ar[d]\ar[r] & \Lambda \ar[d]\\ \mathbb{Z}[y]/(1+y^2)\ar[r]&\mathbb{Z}_7[y]/(1+y^2)}
\end{eqnarray*}

We have a commutative square of modules over the corresponding rings:
\begin{eqnarray}\xymatrix{N \ar[d]^{q_1}\ar[r]^{p_1} & N\otimes\Lambda \ar[d]^{q_2}\\ N\otimes\mathbb{Z}[y]/(1+y^2)\ar[r]^{p_2}&N\otimes\mathbb{Z}_7[y]/(1+y^2)}\label{MSqMod1}
\end{eqnarray}
where again the maps $p_1,p_2,q_1,q_2$ are the natural projections, and each $\otimes$ is over $S$.

\begin{lemma}
We may rewrite the square {\rm (\ref{MSqMod1})} as{\rm:}
\begin{eqnarray}\xymatrix{N \ar[d]^{q_1}\ar[r]^{p_1} & N/S\sigma_{-x}\ar[d]^{q_2}\\ \mathbb{Z}[y]/(1+y^2)\ar[r]^{p_2}&\mathbb{Z}_7[y]/(1+y^2)}\label{MSq}
\end{eqnarray}
where $p_1$ is the natural projection, $p_2$ is reduction modulo 7, and $q_1,q_2$ are restrictions of the ring homomorphisms{\rm :} $$S \to \mathbb{Z}[y]/(1+y^2),\qquad\qquad \Lambda \to \mathbb{Z}_7[y]/(1+y^2),$$
respectively, both mapping $x \mapsto -1, y \mapsto y$.
\end{lemma}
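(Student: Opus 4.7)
The plan is to check each corner of the square and each arrow separately. The single abstract ingredient I need is that, for any right ideal $N$ of $S$ and any two-sided ideal $J \subseteq S$, one has $N \otimes_S (S/J) \cong (N+J)/J$, i.e.\ the image of $N$ under the projection $S \to S/J$. All four identifications then reduce to applying this fact and computing the image of the generators $\{4,\phi_1,\phi_2\}$ of $N$ under the appropriate projection.

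For the top-right corner, I first check that the kernel of $S \to \Lambda$ is in fact a two-sided ideal, i.e.\ $S\sigma_{-x}=\sigma_{-x}S$. This follows from the relation $yxy^{-1}=x^{-1}$, which gives $y\sigma_{-x}=x^{-6}\sigma_{-x}y$. The tensor identity then yields $N\otimes_S\Lambda \cong N/N\sigma_{-x}$, which is precisely the meaning of the shorthand $N/S\sigma_{-x}$ used in \eqref{MSq}.

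For the bottom row, the ring maps in \eqref{MSqI} are determined by $x\mapsto -1$ with $y$ fixed. So I must show the image of $N$ under this map is all of $\mathbb{Z}[y]/(1+y^2)$. A direct computation on the three generators gives $4\mapsto 4$, $\phi_1\mapsto 1-1-1+3-1-1+1=1$, and $\phi_2\mapsto 2-2+1-y=1-y$. Because $\phi_1$ hits the unit $1$, the image is all of $\mathbb{Z}[y]/(1+y^2)$, and reducing the same calculation modulo $7$ handles the bottom-right corner.

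For the arrows, $p_1,p_2,q_1,q_2$ in \eqref{MSqMod1} are by definition the maps induced from the ring-level square \eqref{MSqI} by tensoring with $N$; under the corner identifications above they become, respectively, the natural projection $N\to N/N\sigma_{-x}$, reduction modulo $7$, and the two restrictions of $x\mapsto -1,\; y\mapsto y$. Commutativity of \eqref{MSq} is inherited from that of \eqref{MSqI}. The only genuine obstacle is the surjectivity check in the bottom row; once the evaluation $\phi_1|_{x=-1}=1$ is in hand, the rest is formal bookkeeping with tensor products.
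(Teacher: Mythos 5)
Your ``single abstract ingredient'' is false as stated, and the gap it papers over is precisely the non-formal content of this lemma. For a right ideal $N$ of $S$ and a two-sided ideal $J$, right-exactness of the tensor product gives $N\otimes_S(S/J)\cong N/NJ$, \emph{not} $(N+J)/J$. The natural surjection $N/NJ\to (N+J)/J\cong N/(N\cap J)$ has kernel $(N\cap J)/NJ\cong\mathrm{Tor}_1^S(S/N,S/J)$, which is nonzero in general; only the inclusion $NJ\subseteq N\cap J$ comes for free. Consequently, computing the images of the generators $4,\phi_1,\phi_2$ under the projections identifies each corner only up to this unknown kernel. Your surjectivity check ($\phi_1\mapsto 1$ under $x\mapsto -1$) settles half of the bottom-left identification but silently assumes the other half, namely injectivity of $N/N(x+1)\to S/S(x+1)$, i.e.\ the equality $N\cap S(x+1)=N(x+1)$. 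This is not ``formal bookkeeping''; it is the theorem.

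The paper's proof is devoted almost entirely to closing exactly this gap. For the top-right corner it shows $N\sigma_{-x}=S\sigma_{-x}$, using $\phi_1\sigma_{-x}=\sigma_{-x}$ in $S$ (which holds because $(x+1)\sigma_{-x}=1+x^7=0$ in $S$ and $\phi_1=1+(x^5-x^3-2x^2+x)(x+1)$); this yields $N\otimes\Lambda\cong N/N\sigma_{-x}=N/S\sigma_{-x}$ with no discrepancy between $N\sigma_{-x}$ and $N\cap S\sigma_{-x}$. For the bottom-left corner it proves $N\cap S(x+1)=N(x+1)$ directly: given $w=(a+by)(x+1)\in N\cap S(x+1)$ with $a,b$ polynomials in $x$, the commutation relation $y\phi_1=\phi_1x^{-6}y$ gives $w\phi_1=\phi_1(a+bx^{-6}y)(x+1)\in N(x+1)$, and then multiplying $\phi_1=1+(x^5-x^3-2x^2+x)(x+1)$ on the left by $w$ and rearranging exhibits $w$ itself as an element of $N(x+1)$. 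Only after these two equalities are in place do the generator evaluations you performed complete the identification of the corners and arrows. You need to reinstate these verifications for the proof to stand.
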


\begin{proof}
In $S$ we have $\sigma_{-x}=\phi_1\sigma_{-x}$, so we have $N\sigma_{-x}=S\sigma_{-x}$.  Thus $$N\otimes\Lambda\cong N/N\sigma_{-x}\cong N/S\sigma_{-x},$$ and the map $p_1$ is the natural projection.

We have: \begin{eqnarray}\phi_1=1+(x^5-x^3-2x^2+x)(x+1).\label{phi1is1}\end{eqnarray}
Given $w \in N \cap S(x+1)$ we have $w=a(x+1)+by(x+1)$ for some polynomial expressions $a,b$ in $x$ over $\mathbb{Z}$.  Note that $y\phi_1=\phi_1x^{-6}y$.  Thus we have \begin{eqnarray*}w\phi_1&=&(a+by)\phi_1(x+1)\\&=&(a\phi_1+b\phi_1 x^{-6}y)(x+1)\\&=&\phi_1(a+bx^{-6}y)(x+1).
\end{eqnarray*}
Thus multiplying (\ref{phi1is1}) on the left by $w$ and rearranging gives:
\begin{eqnarray*}w&=&\phi_1(a+bx^{-6}y)(x+1) - w(x^5-x^3-2x^2+x)(x+1).\end{eqnarray*}  In particular $w \in N(x+1).$  Thus we have $N \cap S(x+1) =N(x+1)$.  We conclude: \begin{eqnarray*}N \otimes \mathbb{Z}[y]/(1+y^2)&\cong& N/N(x+1) \\&\cong& N/(N \cap S(x+1)) \subseteq \mathbb{Z}[y]/(1+y^2),\end{eqnarray*} and $q_1$ is the desired restriction.  However from (\ref{phi1is1}) we know $q_1(\phi_1)=1$, so in fact we may identify $N \otimes \mathbb{Z}[y]/(1+y^2)\cong \mathbb{Z}[y]/(1+y^2)$.

Finally note that $q_1(\sigma_{-x})=7$, so $p_2$ is reduction modulo 7, and the square commutes, so $q_2$ must also be the desired restriction.
\end{proof}

We have $N/S\sigma_{-x}\subset S/S\sigma_{-x}\cong \Lambda$.  Thus we may identify $N/S\sigma_{-x}$ with the right ideal, $I=(4,\phi_1,\phi_2)\Lambda$.

\begin{lemma}
The right ideal  $I$ is freely generated over $\Lambda$ by the element $1+yx$.
\end{lemma}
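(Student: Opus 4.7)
The approach is to establish the two inclusions $I \subseteq (1+yx)\Lambda$ and $(1+yx)\Lambda \subseteq I$ separately. Freeness is automatic once equality is known: since $\Lambda$ embeds in the Hamilton quaternions, it has no zero-divisors, and $1+yx \neq 0$ (its two components in the decomposition $\Lambda = \mathbb{Z}[\zeta_{14}] \oplus \mathbb{Z}[\zeta_{14}]\,y$ are both nonzero), so the right annihilator of $1+yx$ vanishes and $(1+yx)\Lambda$ is free of rank one. The computational backbone throughout is the identity
\[
(1+yx)(1-yx) = 1 - yxyx = 1 - y\cdot(xyx) = 1 - y^2 = 2
\]
in $\Lambda$, using the quaternion relations $xyx = y$ and $y^2 = -1$.

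For $I \subseteq (1+yx)\Lambda$ I would verify each of $4, \phi_1, \phi_2 \in (1+yx)\Lambda$. The identity above immediately gives $2, 4 \in (1+yx)\Lambda$. Reducing $\phi_1$ modulo $\sigma_{-x}$ (by substituting $x^6 = x^5 - x^4 + x^3 - x^2 + x - 1$ in $\Lambda$) produces $\phi_1 = 2x(x^4 - x^3 - x^2 - x + 1)$, which lies in $2\Lambda \subseteq (1+yx)\Lambda$. For $\phi_2$, a direct quaternionic calculation using $yx^k = x^{-k}y$ and $x^7 = -1$ gives the explicit factorisation $\phi_2 = (1+yx)\mu$ with $\mu := (1 + x - x^3) + (x^5 + x^6)y$.

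For the reverse inclusion, I must exhibit $1+yx \in I$. The plan is first to compute $\Lambda/(1-yx)\Lambda$ concretely: since $2 = (1-yx)(1+yx) \in (1-yx)\Lambda$ and $(1-yx)y = y + x^{-1}$ gives $y \equiv -x^{-1} = x^6 \pmod{(1-yx)\Lambda}$, one identifies $\Lambda/(1-yx)\Lambda \cong \mathbb{Z}[\zeta_{14}]/2\mathbb{Z}[\zeta_{14}]$ via $\pi\colon a + by \mapsto a + bx^6 \bmod 2$. Under the factorisation $\Phi_7 \equiv (x^3+x+1)(x^3+x^2+1) \pmod 2$, giving $\mathbb{Z}[\zeta_{14}]/2 \cong \mathbb{F}_8 \times \mathbb{F}_8$, a short computation shows that $\pi(\mu) = 1 + x + x^3 + x^4 + x^5 \pmod 2$ reduces to $(1, x)$, and that $\beta := x^4 - x^3 - x^2 - x + 1$ reduces to $(x, x^2+1)$; both are units. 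The first yields $c \in \mathbb{Z}[\zeta_{14}]$ with $1 - \mu c \in (1-yx)\Lambda$, so $1 = \mu c + (1-yx)\delta$ for some $\delta \in \Lambda$. The second yields $(2, x\beta)\mathbb{Z}[\zeta_{14}] = \mathbb{Z}[\zeta_{14}]$, hence $(2, x\beta)\Lambda = \Lambda$, and so $\delta = 2\alpha_1 + x\beta\alpha_2$ for some $\alpha_1, \alpha_2 \in \Lambda$. Left-multiplying $1 = \mu c + (1-yx)(2\alpha_1 + x\beta\alpha_2)$ by $(1+yx)$ and applying the factorisations of $\phi_1$ and $\phi_2$ gives
\[
1 + yx = \phi_2 c + 4\alpha_1 + \phi_1 \alpha_2 \in I.
\]

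The main conceptual obstacle is locating the right module in which to linearise the inclusion $1+yx \in I$: recognising that the key identity $(1+yx)(1-yx) = 2$ makes $\Lambda/(1-yx)\Lambda \cong \mathbb{Z}[\zeta_{14}]/2$ the natural quotient in which to analyse the factorisations $\phi_2 = (1+yx)\mu$ and $\phi_1 = 2x\beta$. Once this structural observation is in place, all remaining verifications are short finite-field computations in $\mathbb{F}_8 \times \mathbb{F}_8$.
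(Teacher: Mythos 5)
Your forward inclusion $I \subseteq (1+yx)\Lambda$ and your freeness argument are correct and essentially the paper's: the identity $(1+yx)(1-yx)=2$ handles $4$ and $\phi_1$ (your reduction $\phi_1 = 2x(x^4-x^3-x^2-x+1)$ in $\Lambda$ is right and agrees with the paper's $\phi_1 = 2(x^3-1)^2$), and your factorisation $\phi_2 = (1+yx)\bigl((1+x-x^3)+(x^5+x^6)y\bigr)$ checks out and is equivalent to the paper's $\phi_2 = 2(1+x)-(1+yx)x^3$.

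The reverse inclusion is where there is a genuine problem. Your identification $\pi\colon a+by \mapsto a+bx^6 \bmod 2$ of $\Lambda/(1-yx)\Lambda$ with $\mathbb{Z}[\zeta_{14}]/2$ is not well defined: the element $(1-yx)x = x - x^{-2}y$ of the right ideal is sent to $x - x^{-2}x^6 = x - x^4 \not\equiv 0 \bmod 2$ (indeed $1-yx = 1-x^{-1}y$ itself is sent to $1-x^5$). Since $(1-yx)\Lambda$ is only a right ideal, reducing $b(x)y = y\,b(x^{-1})$ via $y \equiv x^{-1}$ forces the conjugation $b(x)\mapsto b(x^{-1})$; the correct map is $a+by \mapsto a + x^{-1}b(x^{-1}) \bmod 2$. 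Under the corrected map $\pi(\mu) = x^3$, not your $1+x+x^3+x^4+x^5$; this is still a unit, so your architecture is repairable, but the computation as written is wrong. More importantly, the entire detour through $\mathbb{F}_8\times\mathbb{F}_8$ is unnecessary: since $\sigma_{-x}=0$ in $\Lambda$, the element $x^3-1$ is a unit with inverse $-x^3+x^2-x$, and your own $\beta$ satisfies $x\beta = (x^3-1)^2$. Hence $\phi_1 = 2(x^3-1)^2$ gives $2\in I$ at once, whence $\phi_2 - 2(1+x) = -x^3+x^3y \in I$, and right-multiplication by $x^4$ gives $1+yx\in I$ in one line --- which is exactly the paper's proof.
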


\begin{proof}
In $\Lambda$ we have: $$\phi_1=\phi_1+\sigma_{-x}=2(x^3-1)^2.$$However we also know that:$$(x^3-1)(-x^3+x^2-x)=-x^6+x^5-x^4+x^3-x^2+x=1,$$ so $x^3-1$ is a unit and $2\in I$.

Thus $-x^3+x^3y=\phi_2-2(1+x)\in I$.  Multiplying (on the right) by $x^4$ gives us that $1+yx \in I$.

We next show that $1+yx$ divides (on the left) $4, \phi_1,\phi_2$.  Note first that $(1+yx)(1-yx)=2$, so $2\in (1+yx)\Lambda$.  Thus $4\in (1+yx)\Lambda$ and $\phi_1=2(x^3-1)^2\in (1+yx)\Lambda$.

Finally note that: $$\phi_2=2(1+x)-(1+yx)x^3\in (1+yx)\Lambda .$$
We conclude that $1+yx$ generates the ideal $I$.  Further, as $\Lambda$ contains no zero divisors, we know that $1+yx$ must generate $I$ freely.
\end{proof}

Suppose now that $N$ is free of rank one.  Then it must be freely generated by some element $v\in N$.  Then $p_1(v),q_1(v)$ must freely generate $I,\,\, \mathbb{Z}[y]/(1+y^2)$ respectively.  That is: \begin{eqnarray*}p_1(v)&=&(1+yx)\mu_1,\\q_1(v)&=&\mu_2,\end{eqnarray*} for units $\mu_1 \in \Lambda^*, \mu_2 \in (\mathbb{Z}[y]/(1+y^2))^*$.  By commutativity of (\ref{MSq}) we have:\begin{eqnarray} p_2(\mu_2)=q_2((1+yx)\mu_1).\label{Mcomm}\end{eqnarray}  Let \begin{eqnarray*}\hat{p_2}\colon (\mathbb{Z}[y]/(1+y^2))^* &\to& (\mathbb{Z}_7[y]/(1+y^2))^*,\\ \hat{q_2}\colon \Lambda^* &\to& (\mathbb{Z}_7[y]/(1+y^2))^*,\end{eqnarray*} denote the maps on units induced by the natural projections.  Then from (\ref{Mcomm}) we get: \begin{eqnarray}\hat{p_2}(\mu_2)=(1-y)\hat{q_2}(\mu_1).\label{Nfreecontr}\end{eqnarray}

The proof of the following lemma is usually deferred to the proof of \cite[Lemma 10.13]{Swan3} in Swan's long paper (see for example \cite[Theorem 3.2]{Beyl}).  However, in order to keep our proof of Theorem A self-contained, we give a more elementary proof, based on a number theoretic result proved in Appendix \ref{elem}.

\begin{lemma}
Let $H$ denote the subgroup of the abelian group $$(\mathbb{Z}_7[y]/(1+y^2))^*,$$ generated by the images of $\hat{p_2}, \hat{q_2}$.  Then $H$ is generated by $3,y$ and has cosets $H, (1+2y)H, (-3+4y)H, (1+4y)H$. \label{longpaper}
\end{lemma}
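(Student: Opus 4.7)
The target group $(\mathbb{Z}_7[y]/(1+y^2))^*$ is the unit group of the field $\mathbb{F}_{49}$, hence cyclic of order $48$; since $3$ has order $6$ in $\mathbb{F}_7^*$ and $y$ has order $4$ (meeting $\langle 3\rangle$ only in $\{\pm 1\}$), the subgroup $\langle 3, y\rangle$ has order $12$ and index $4$ in the ambient group. My plan is to establish $H = \langle 3, y\rangle$ by two inclusions and then verify the four listed coset representatives by direct computation in $\mathbb{F}_{49}$.

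For the inclusion $\langle 3, y\rangle \subseteq H$: I note that $y \in \mathrm{im}(\hat{p_2})$. To place $3$ in the image, I would exhibit the central cyclotomic unit $\mu = 1 + x^2 + x^{-2} = 1 + \zeta_7 + \zeta_7^{-1} \in \mathbb{Z}[\zeta_7] \subset \Lambda$. Its absolute $\mathbb{Z}$-norm is the product $\prod_{k=1}^{3}(1 + 2\cos(2k\pi/7)) = -1$, so $\mu \in \Lambda^*$; and since $\hat{q_2}$ sends $x \mapsto -1$, one has $\hat{q_2}(\mu) = 1 + 1 + 1 = 3$. Combined with $\hat{q_2}(x) = -1$, this yields all of $\mathbb{F}_7^* = \langle 3\rangle$ inside $H$.

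The reverse inclusion $H \subseteq \langle 3, y\rangle$ is the crux. Here I would use that $\Lambda$ is a definite maximal order in the quaternion algebra $\mathbb{Q}(\zeta_7) \oplus \mathbb{Q}(\zeta_7)\,y$ over the totally real cubic field $K = \mathbb{Q}(\zeta_7)^+$, with $Q_{28}$ as its unique maximal finite subgroup. A Dirichlet-type structure theorem for units in such orders then gives $\Lambda^* = \mathcal{O}_K^* \cdot Q_{28}$, where $\mathcal{O}_K = Z(\Lambda) = \mathbb{Z}[x+x^{-1}]$. Under $\hat{q_2}$ the centre $\mathcal{O}_K$ lands in $\mathbb{F}_7 \subset \mathbb{F}_{49}$, so $\hat{q_2}(\mathcal{O}_K^*) \subseteq \mathbb{F}_7^* = \langle 3\rangle$; and $\hat{q_2}(Q_{28}) = \langle -1, y\rangle = \langle y\rangle$. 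Their product is contained in $\langle 3, y\rangle$.

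For the cosets, $\langle 3, y\rangle$ is precisely $\mathbb{F}_7^* \cup y\mathbb{F}_7^*$, i.e., those elements $a + by \in \mathbb{F}_{49}^*$ with $a = 0$ or $b = 0$. Since $1+2y$, $-3+4y$, and $1+4y$ each have both coordinates nonzero, they lie outside $\langle 3, y\rangle$; computing their three pairwise ratios in $\mathbb{F}_{49}$ and checking that each ratio also has both coordinates nonzero shows they lie in three distinct nontrivial cosets, giving the four listed representatives. The main obstacle is the structural identification $\Lambda^* = \mathcal{O}_K^* \cdot Q_{28}$, which requires either the general theory of definite quaternion orders or a direct finiteness argument for $\Lambda^*/(\mathcal{O}_K^* \cdot Q_{28})$.
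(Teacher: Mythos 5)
Your strategy is essentially the one the paper outsources to its references (the proof here is literally a citation to Beyl--Waller's Theorem 3.2 and Swan's Lemma 10.13), and the parts you actually carry out are correct: $(\mathbb{Z}_7[y]/(1+y^2))^*\cong\mathbb{F}_{49}^*$ is cyclic of order $48$ and $\langle 3,y\rangle=\mathbb{F}_7^*\cup\mathbb{F}_7^*y$ has order $12$ and index $4$; the image of $\hat{p_2}$ is $\{\pm1,\pm y\}=\langle y\rangle$ because the Gaussian units are exactly $\pm1,\pm i$; the element $1+x^2+x^{-2}$ maps in $\Lambda$ to $1+\zeta_7+\zeta_7^{-1}$, whose norm to $\mathbb{Q}$ is $(-1)^3f(-1)=-1$ for $f(t)=t^3+t^2-2t-1$ the minimal polynomial of $\zeta_7+\zeta_7^{-1}$, so it is a unit with $\hat{q_2}$-image $3$, a generator of $\mathbb{F}_7^*$; and the ratios $(1+2y)(-3+4y)^{-1}=3+y$, $(1+2y)(1+4y)^{-1}=3+4y$, $(-3+4y)(1+4y)^{-1}=2+3y$ all have both coordinates nonzero, so the four listed cosets are indeed distinct.

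The genuine gap is the reverse inclusion $\hat{q_2}(\Lambda^*)\subseteq\langle3,y\rangle$, which you reduce to the identity $\Lambda^*=\mathcal{O}_K^*\cdot Q_{28}$ and then attribute to ``a Dirichlet-type structure theorem.'' No off-the-shelf theorem gives that identity: the general theory of totally definite quaternion orders only yields that the group $\Lambda^1$ of reduced-norm-one units is finite and that $\Lambda^*/(\mathcal{O}_K^*\Lambda^1)$ embeds in $\mathrm{nrd}(\Lambda^*)/(\mathcal{O}_K^*)^2$, a subgroup of (totally positive units)/(squares). Two field-specific facts must therefore be supplied: (i) every totally positive unit of $\mathcal{O}_K=\mathbb{Z}[\zeta_7+\zeta_7^{-1}]$ is a square, which holds because $-1$, $\zeta_7+\zeta_7^{-1}$ and $1+\zeta_7+\zeta_7^{-1}$ realise all eight sign patterns at the three real places, but must be checked; and (ii) $\Lambda^1$ equals the image of $Q_{28}$, which follows from $\mathrm{nrd}(a+by)=a\bar a+b\bar b$ together with an AM--GM estimate on the archimedean absolute values forcing $a=0$ or $b=0$, and then Kronecker's theorem on the roots of unity in $\mathbb{Z}[\zeta_{14}]$. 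Note also that the cheaper route of reducing the norm equation modulo the prime over $7$ fails here: since $-1$ is a nonsquare mod $7$, the condition $\hat{q_2}(a)^2+\hat{q_2}(b)^2\neq0$ in $\mathbb{F}_7$ does not force either coordinate to vanish, so the full determination of $\Lambda^*$ really is needed. In short, the crux of the lemma --- exactly the content of the Swan lemma being cited --- is asserted rather than proved in your writeup, as you yourself flag in your final sentence.
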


\begin{proof}

Let $L$ denote the multiplicative span of $3,y$, so $$L=\{1,2,3,4,5,6,y,2y,3y,4y,5y,6y\}.$$  We will show that $H=L$ and the cosets of $H$ follow. 

Applying $\hat{q_2}$ to units in $\Lambda$ of the form: $$1+(-x)+ \cdots+(-x)^r \quad{\rm or}\quad(1+(-x)+ \cdots+ (-x)^r)y,$$ for $r=0,1,2,3,4,5$, we obtain all the elements of $L$.

The units in the Gaussian integers $\mathbb{Z}[y]/(1+y^2)$ are just $\{1,y,-1, -y\}$, as for $a+by$ to be a unit with $a,b\in\mathbb{Z}$, we must have $a^2+b^2=1$.  Thus the image of $\hat{p_2}$ lies in the (multiplicative) span of $y$.

It remains to show that the image of $\hat{q_2}$ is contained in $L$.  First recall that $\Lambda$ embeds in the quaternions $\mathbb{H}$.  Specifically, if $\zeta=e^{\frac{2\pi i}7}$ then we have an embedding sending: $$x\mapsto -\zeta,\qquad y\mapsto j.$$
We thus identify $\Lambda=\mathbb{Z}[\zeta,j]\subseteq\mathbb{H}$.  The quotient map  $\Lambda\to S/(x+1,\,\sigma_{-x})$ has kernel generated by $x+1$ which is identified with $1-\zeta$.

Any unit in $\Lambda$ has the form $u$ or $vj$ with $u,v\in \mathbb{Z}[\zeta]$ (see lemma \ref{Quat} in Appendix \ref{elem}).  Thus the unit will map to an element of $L$, under  $\hat{q_2}$.
\end{proof}

\begin{lemma}\label{Nnotfree}
The module $N$ is not free.
\end{lemma}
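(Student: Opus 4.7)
The plan is a proof by contradiction. Suppose $N$ is free of rank one over $S$; then the discussion preceding the statement produces units $\mu_1 \in \Lambda^*$ and $\mu_2 \in (\mathbb{Z}[y]/(1+y^2))^*$ satisfying (\ref{Nfreecontr}). Rearranging,
\[ 1 - y \;=\; \hat{p_2}(\mu_2)\,\hat{q_2}(\mu_1)^{-1}, \]
and since $H$ is by definition a subgroup of $(\mathbb{Z}_7[y]/(1+y^2))^*$ containing the images of both $\hat{p_2}$ and $\hat{q_2}$, the right-hand side lies in $H$. The entire task therefore reduces to verifying that $1 - y \notin H$.

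For that, I would use the preceding lemma's description of $H$ as being generated by $3$ and $y$ inside $(\mathbb{Z}_7[y]/(1+y^2))^* = \mathbb{F}_{49}^*$. Because $3$ is a primitive root modulo $7$, the subgroup $\langle 3 \rangle$ coincides with $\mathbb{F}_7^*$, while $\langle y \rangle = \{1, y, -1, -y\}$. Consequently
\[ H \;=\; \{a : a \in \mathbb{F}_7^*\} \,\cup\, \{b y : b \in \mathbb{F}_7^*\}, \]
so every element of $H$ is either a non-zero element of $\mathbb{F}_7$ or a non-zero $\mathbb{F}_7$-multiple of $y$. The element $1 - y$ is of neither form, since both its constant term and its $y$-coefficient are non-zero. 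Hence $1 - y \notin H$, which contradicts the consequence of (\ref{Nfreecontr}) derived above and completes the argument.

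There is no substantive obstacle here once the Milnor square framework from the previous section is in hand; the conceptual heart of the matter is that the gluing unit forced by the square is $q_2(1 + yx) = 1 - y$, and this particular element escapes the subgroup $H$ of attainable unit-images. One could equivalently confirm matters by directly verifying that $1 - y$ lies in the non-trivial coset $(-3 + 4y) H$ exhibited in the preceding lemma (since $(1-y)^2 = -2y \in H$ while $(-3+4y)^{-1} = 1-y$), but the two-component inspection above is more direct.
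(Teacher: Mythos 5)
Your proposal is correct and follows essentially the same route as the paper: assume $N$ is free, deduce from (\ref{Nfreecontr}) that $1-y\in H$, and contradict this. The only (cosmetic) difference is in verifying $1-y\notin H$: the paper computes $-3y(1-y)=-3+4y$ to place $1-y$ in the non-trivial coset $(-3+4y)H$ (a fact it reuses afterwards to identify $\pi_2(X_{\mathcal{P}'})$ with Nancy's Toy), whereas you describe $H$ explicitly as $\mathbb{F}_7^*\cup\mathbb{F}_7^*y$ and observe that $1-y$ has both coordinates non-zero.
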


\begin{proof}
If $N$ were free then by (\ref{Nfreecontr}) we would have $1-y \in H$.  However $-3y(1-y)=(-3+4y)$, so $1-y\in (-3+4y)H$.
\end{proof}

Combining lemmas \ref{3isgood}, \ref{Nneedsfree}, \ref{Nnotfree} we deduce:

\bigskip
\noindent{\bf Theorem A.} {\sl We have a presentation for the quaternion group $Q_{28}${\rm:} $$\mathcal{P}'= \langle x,y\,\vert\, y^2=x^7,\quad y^{-1}xyx^{2}=x^3y^{-1}x^2y\rangle,$$  which has a non-standard second homotopy group.  That is, if $X_{\mathcal{P}'}$ is the Cayley complex associated to $\mathcal{P}'$ and $X_{\mathcal{P}}$ is the Cayley complex associated to the standard presentation{\rm:}$$\mathcal{P}= \langle x,y\,\vert\, y^2=x^7,\quad xyx=y\rangle,$$ then $\pi_2(X_{\mathcal{P}'})\not\cong \pi_2(X_{\mathcal{P}})$ as modules over $\ZQ$.}

\bigskip
The fact that our procedure resulted in the coset $(-3+4y)H$ actually tells us (see proof of \cite[Theorem 3.2]{Beyl}) that our presentation $\mathcal{P}'$ has the same second homotopy group as the algebraic 2--complex constructed in \cite{Beyl}: the so called Nancy's Toy \cite[\S1.9.4]{Jens}.  This is no surprise given that $N$ is non-free, as from \cite[pp. 110--111]{Swan3} we know that $(-3+4y)H$ is the only coset corresponding to a non-free stably free module and $N$ had to be stably free, as the Hurewicz isomorphism theorem and Schanuel's lemma combine to imply $\pi_2(X_{\mathcal{P}'})$ and $\pi_2(X_{\mathcal{P}})$ are stably equivalent.

\section{The $D(2)$-property for $Q_{4n}$}\label{D2Q4n}

Thus we have shown that it is possible for a finite balanced presentation of $Q_{28}$ to have a non-standard second homotopy group.  Let $Y$ be a $D(2)$--complex, with $\pi_1(Y)=Q_{4n}$ for some $n\geq2$. In particular we have shown that $Y$ having a non-standard second homotopy group is not sufficient for it to solve Wall's $D(2)$ problem.

 One might ask if every such $Y$ of minimal Euler characteristic is homotopy equivalent to $\mathcal{E}_{n,r}$ for some $r$.  Nicholson has answered this question in the negative.  In the discussion proceeding \cite[Theorem B]{Nich4} he notes that whilst the number of homotopically distinct presentations in our family grows linearly in $n$, the number of minimal $Y$ as above grows exponentially.

Nonetheless he does show that our presentations are enough to verify the $D(2)$ property for $Q_{28}$ \cite[Theorem 8.11]{Nich2}.

It remains possible that: 

\begin{conj} \label{Qconj} Every $D(2)$--complex $Y$ with $\pi_1(Y)=Q_{4n}$ and $\chi(Y)=1$, is homotopy equivalent to a presentation of $Q_{4n}$ of the form{\rm:}
$$\mathcal{Q} =\langle x,y\,\vert\, y^2=x^n,\quad {\rm Eq}(y^{-1}xy, x)\rangle,$$ where ${\rm Eq}(a,b)$ is an equation implied by $ab=1$, equating words in $a,b$.\end{conj}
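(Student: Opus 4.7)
Conjecture \ref{Qconj} asks whether every minimal $D(2)$-complex $Y$ with $\pi_1(Y) = Q_{4n}$ is geometrically realised by a presentation whose second relator equates two words in $a = y^{-1}xy$ and $b = x$ that agree under the substitution $b \mapsto a^{-1}$, i.e., an equation implied by $ab = 1$. Observe that the family $\mathcal{E}_{n,r}$ of \S\ref{pressec} corresponds to the specific equation $ab^{r-1} = b^r a^2$, which is only a linear slice of the conjectured family $\mathcal{Q}$.

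My plan is to proceed in three stages. First, granted the $D(2)$-property for $Q_{4n}$ (established for $n=7$ by \cite[Theorem 7.7]{Nich2}; hypothesised otherwise), every minimal $D(2)$-complex $Y$ is homotopy equivalent to the Cayley complex $X_{\mathcal{Q}_0}$ of some balanced presentation $\mathcal{Q}_0$ of $Q_{4n}$. Second, using Andrews--Curtis moves together with the relator-by-generator multiplications noted at the start of \S\ref{pressec} (both of which preserve the homotopy type of the Cayley complex), I would reduce any such $\mathcal{Q}_0$ to one in which the first relator is the spherical relator $y^2 x^{-n}$; the rationale is that $y^2$ is the unique central element of order $2$ in $Q_{4n}$, and a careful analysis of $2$-spherical classes in the algebraic $2$-complex should permit extracting this relator as one of the two. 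Third, with the first relator fixed, it remains to rewrite the second relator $w$ into the conjectured form $\mathrm{Eq}(a,b)$, using Andrews--Curtis moves and conjugation by powers of the central element $y^2 = x^n$.

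The \textbf{main obstacle} is this third stage. As Nicholson observes in the discussion preceding the conjecture, the number of homotopy types of minimal $D(2)$-complexes over $Q_{4n}$ grows exponentially in $n$, whereas the explicit subfamily $\{\mathcal{E}_{n,r}\}$ grows only linearly; so the full family $\mathcal{Q}$ must carry an exponentially large parametrisation, and a bijective match with homotopy types requires a genuinely new invariant. A plausible strategy is to enrich the Milnor-square analysis of \S\ref{Milnor}: assign to each minimal $D(2)$-complex a coset in an appropriate unit group generalising the $(-3+4y)H$ coset of Lemma \ref{Nnotfree}, construct for each coset an explicit $\mathrm{Eq}(a,b)$ realising it, and then argue by a counting comparison (stable-class dimension against coset count) that these exhaust the class. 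Building this explicit correspondence, however, appears well beyond the methods of the present paper and will likely require either a completely new obstruction to realising algebraic $2$-complexes over $\ZQn$ or a sharper structural theorem about Andrews--Curtis orbits of balanced presentations of binary polyhedral groups.
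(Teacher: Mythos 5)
The statement you are trying to prove is Conjecture \ref{Qconj}: the paper does not prove it, and explicitly leaves it open. The only thing the paper offers in its direction is the lemma at the end of \S\ref{D2Q4n}, which gives two explicit generators of $\pi_2(\widetilde{X_{\mathcal{Q}}})$ for a presentation already of the form $\mathcal{Q}$, ``as a starting point.'' Your proposal is therefore being measured against nothing --- and, as you yourself concede in your final paragraph, it is a research programme rather than a proof. Each of your three stages contains an unproven claim. Stage 1 assumes the $D(2)$ property for $Q_{4n}$, which is known only for $n=7$ (\cite[Theorem 7.7]{Nich2}, which itself relies on the presentation $\mathcal{P}'$ of this paper) and is open for general $n$; without it you cannot even replace $Y$ by the Cayley complex of \emph{some} balanced presentation. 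Stage 2 asserts that Andrews--Curtis moves let you normalise one relator to $y^2x^{-n}$; no such normal-form theorem for balanced presentations of $Q_{4n}$ is known, and deciding Andrews--Curtis equivalence is precisely the kind of problem that is intractable here. Stage 3 you correctly identify as the main obstacle, but ``enrich the Milnor-square analysis'' and ``argue by a counting comparison'' are not arguments: the Milnor-square machinery of \S\ref{Milnor} computes an invariant of a module you already have in hand; it gives no mechanism for producing a presentation realising a prescribed coset, which is exactly the realisation problem the conjecture is about.

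One small correction of emphasis: the exponential-versus-linear count you cite from \cite{Nich4} rules out the subfamily $\mathcal{E}_{n,r}$ as exhaustive, but it is not in itself evidence for or against the full conjecture, since the family of all equations $\mathrm{Eq}(a,b)$ implied by $ab=1$ is vastly larger. The honest summary is that your proposal correctly identifies where the difficulties lie but does not close any of them; it should be presented as a strategy for attacking the conjecture, not as a proof.
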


 As a starting point for proving this, we would require a more systematic way of computing generators of $\pi_2(\widetilde {X_{\mathcal{Q}}})$.  We are grateful to the referee for suggesting the following approach:

Let $\vec{G_1},\vec{G_2}$ be generators of $C_2(\widetilde {X_{\mathcal{Q}}})$ corresponding to the relations of $\mathcal{Q}$, respectively.  Let $\partial''_2\colon C_2(\widetilde {X_{\mathcal{Q}}})\to C_1(\widetilde {X_{\mathcal{Q}}})$ be the boundary map.  Recall from section \ref{stan} the generators $\vec{E_1},\vec{E_2}$ for $C_2(\widetilde {X_{\mathcal{P}}})$, where $\mathcal{P}$ was the standard presentation for $Q_{4n}$.    As the second relation of $\mathcal{Q}$ is implied by the second relation of $\mathcal{P}$ and $\mathcal{Q}$ presents $Q_{4n}$ we have: \begin{eqnarray*}\partial_2(\vec{E_1})&=&\partial''_2(\vec{G_1})\\\partial''_2(\vec{G_2})&=&\partial_2(\vec{E_2})\lambda,\\ \partial_2(\vec{E_2})&=&\partial''_2(\vec{G_1})\mu_1+
\partial''_2(\vec{G_2})\mu_2,
\end{eqnarray*} for some $\lambda,\mu_1,\mu_2\in\ZQn$.  

Thus we have a commutative diagram:
$$\xymatrix{ 
0\ar[r]&\pi_2(\widetilde {X_{\mathcal{Q}}})\ar@<-1ex>[d]\ar@{^{(}->}[r] & C_2(\widetilde {X_{\mathcal{Q}}})\ar@<-1ex>[d]_f \ar[r]^{\partial_2''}&C_1(\widetilde {X_{\mathcal{Q}}})\ar@{=}[d]\\
0\ar[r]&\pi_2(\widetilde {X_{\mathcal{P}}}) \ar@<-1ex>[u]\ar@{^{(}->}[r] &C_2(\widetilde {X_{\mathcal{P}}})\ar@<-1ex>[u]_g\ar[r]^{\partial_2}&C_1(\widetilde {X_{\mathcal{P}}})}
$$
where $f,g$ are given by:$$f\colon \begin{array}{l}
\vec{G_1}\mapsto \vec{E_1},\\ \vec{G_2}\mapsto \vec{E_2}\lambda,
\end{array}\qquad\qquad
g\colon \begin{array}{l}
\vec{E_1}\mapsto \vec{G_1},\\ \vec{E_2}\mapsto \vec{G_1}\mu_1+\vec{G_2}\mu_2.\end{array}
$$

The following may then be useful for computing $\pi_2(\widetilde {X_{\mathcal{Q}}})$ in various cases, as a starting point to prove conjecture \ref{Qconj}.  

\begin{lemma}
Regarding $\pi_2(\widetilde {X_{\mathcal{Q}}})$ as a submodule of $C_2(\widetilde {X_{\mathcal{Q}}})$ in the natural way, it is generated by the elements:$$\vec{G_1}((x-1)+\mu_1(1-yx))+\vec{G_2}\mu_2(1-yx),\quad\quad -\vec{G_1}\mu_1\lambda+\vec{G_2}(1-\mu_2\lambda).$$
\end{lemma}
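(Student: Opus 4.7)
The plan is to exploit the commutative diagram just before the statement together with the fact that $\pi_2(\widetilde{X_{\mathcal{P}}}) = \vec{u}\ZQn$ is cyclic. Concretely, I would recognize the two proposed generators as images of canonical elements under the maps $f$ and $g$: the first is $g(\vec{u})$ (the image of the generator of $\pi_2(\widetilde{X_{\mathcal{P}}})$ under $g$), while the second is $\vec{G_2} - g(f(\vec{G_2}))$ (the ``error term'' measuring the failure of $g\circ f$ to equal the identity on $\vec{G_2}$). With these identifications in hand, the proof is essentially a diagram chase.

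First I would verify the two identifications and the claim that both elements lie in $\pi_2(\widetilde{X_{\mathcal{Q}}})$. Direct substitution gives $g(\vec{u}) = \vec{G_1}((x-1)+\mu_1(1-yx)) + \vec{G_2}\mu_2(1-yx)$, matching the first expression, and $\partial_2''(g(\vec{u})) = \partial_2(\vec{u}) = 0$ using $\partial_2'' \circ g = \partial_2$. Similarly $\vec{G_2} - g(f(\vec{G_2})) = -\vec{G_1}\mu_1\lambda + \vec{G_2}(1-\mu_2\lambda)$ matches the second expression, and $\partial_2''(\vec{G_2} - g(f(\vec{G_2}))) = \partial_2''(\vec{G_2}) - \partial_2(f(\vec{G_2})) = \partial_2''(\vec{G_2}) - \partial_2''(\vec{G_2}) = 0$ using both commutativities.

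To establish that these two elements generate, I would take an arbitrary $\vec{w}\in\pi_2(\widetilde{X_{\mathcal{Q}}})$, write $\vec{w} = \vec{G_1}a_1 + \vec{G_2}a_2$, and observe that $f(\vec{w}) \in \pi_2(\widetilde{X_{\mathcal{P}}})$ (since $\partial_2(f(\vec{w})) = \partial_2''(\vec{w}) = 0$). Because $\pi_2(\widetilde{X_{\mathcal{P}}}) = \vec{u}\ZQn$, there exists $r \in \ZQn$ with $f(\vec{w}) = \vec{u}r$, hence $g(f(\vec{w})) = g(\vec{u})r = \vec{v_1}r$ where $\vec{v_1}$ denotes the first proposed generator. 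On the other hand, since $g(f(\vec{G_1})) = g(\vec{E_1}) = \vec{G_1}$ directly from the definitions of $f$ and $g$, the $\ZQn$-linearity of $g\circ f$ yields $\vec{w} - g(f(\vec{w})) = (\vec{G_2} - g(f(\vec{G_2})))a_2 = \vec{v_2}a_2$ where $\vec{v_2}$ denotes the second proposed generator. Combining, $\vec{w} = \vec{v_1}r + \vec{v_2}a_2$, as required.

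I do not anticipate a genuine obstacle: the content is the decomposition $\mathrm{id} = g\circ f + (\mathrm{id} - g\circ f)$, in which the first summand lands inside $g(\pi_2(\widetilde{X_{\mathcal{P}}})) = \vec{v_1}\ZQn$ and the second contributes only the multiple $\vec{v_2}a_2$ (the contribution from $\vec{G_1}$ vanishes). The only ingredients used are the given commutativity of the diagram and the cyclic generation of $\pi_2(\widetilde{X_{\mathcal{P}}})$ by $\vec{u}$ established in Section \ref{stan}. In practice, applying this lemma to a specific $\mathcal{Q}$ reduces the computation of $\pi_2(\widetilde{X_{\mathcal{Q}}})$ to identifying the three group-ring elements $\lambda,\mu_1,\mu_2$.
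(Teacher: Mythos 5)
Your proposal is correct and follows essentially the same route as the paper: both decompose an arbitrary element $\vec{w}$ as $gf(\vec{w})+(1-gf)(\vec{G_2})\alpha_2$, use that $gf(\vec{G_1})=\vec{G_1}$, and invoke the cyclic generation of $\pi_2(\widetilde{X_{\mathcal{P}}})$ by $\vec{u}$ to identify $gf(\vec{w})$ as a multiple of $g(\vec{u})$. Your added verification that the two elements lie in $\ker\partial_2''$ is a harmless (and correct) supplement the paper leaves implicit.
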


\begin{proof}
Suppose $\vec{G_1}\alpha_1+\vec{G_2}\alpha_2\in \pi_2(\widetilde {X_{\mathcal{Q}}})$.  We have: $$
\vec{G_1}\alpha_1+\vec{G_2}\alpha_2=gf(\vec{G_1}\alpha_1+\vec{G_2}\alpha_2)+((1-gf)G_2)\alpha_2.$$ Here
$f(\vec{G_1}\alpha_1+\vec{G_2}\alpha_2)\in \pi_2(\widetilde {X_{\mathcal{P}}})$ so $$ f(\vec{G_1}\alpha_1+\vec{G_2}\alpha_2)=(\vec{E_1}(x-1)+\vec{E_2}(1-yx))\gamma,$$ for some $\gamma\in\ZQn$, by (\ref{ugen}).

Thus $\pi_2(\widetilde {X_{\mathcal{Q}}})$ is generated by $g(\vec{E_1}(x-1)+\vec{E_2}(1-yx)), (1-gf)G_2$ over $\ZQn$ .  We conclude by noting:
\begin{eqnarray*}
g(\vec{E_1}(x-1)+\vec{E_2}(1-yx))&=& \vec{G_1}((x-1)+\mu_1(1-yx))+\vec{G_2}\mu_2(1-yx),
\\
(1-gf)(G_2)&=&-\vec{G_1}\mu_1\lambda+\vec{G_2}(1-\mu_2\lambda).
\end{eqnarray*} \vspace{-13mm}{\,}

\end{proof}

However at present $Q_{28}$ remains the only quaternionic group with chain homotopically distinct minimal algebraic $2$-complexes for which the $D(2)$ property has been verified.  Recent progress for $Q_{24}$ (see discussion proceeding \cite[Lemma 8.3]{Nich3}) shows that the two non-standard algebraic 2--complexes would be realised by a single exotic presentation (with different identifications of the presented group with $Q_{24}$).  However even in this case it is not known if such a presentation exists.

\newpage
\begin{appendices} 
\section{Units in rings of integers} \label{elem}

\begin{lemma}\label{Gal}
Let $K\subset \mathbb{C}$ be a finite degree Galois extension of $\mathbb{Q}$. Suppose that complex conjugation is central in Gal$(K/\mathbb{Q})$.  Then for any $u,v \in \mathcal{O}_K$, if $u\bar{u}+v\bar{v}$ is a unit in $\mathcal{O}_K$, then $u=0$ or $v=0$.
\end{lemma}

\begin{proof} Suppose $u\bar{u}+v\bar{v}$ is a unit, but $u\neq0, \, v\neq 0$. Let $G={\text Gal}(K/\mathbb{Q})$.  We have \begin{eqnarray}\prod_{g \in G} g(u\bar{u}+v\bar{v})=1,\label{equalsone}\end{eqnarray}
as the product on the left  must be an integer (as it is invariant under ${\text Gal}(K/\mathbb{Q})$), a unit (as it is a product of units) and positive (as it is a product of positive real numbers, using the centrality of complex conjugation).    

However, expanding the product, we get a sum of positive real numbers (again by the centrality of complex conjugation), including a pair of integers: \begin{eqnarray*}\prod_{g \in G} g(u\bar{u}+v\bar{v})\geq \prod_{g \in G} g(u\bar{u})+ \prod_{g \in G} g(v\bar{v})\geq 2,\end{eqnarray*}
contradicting (\ref{equalsone}).
\end{proof}

\begin{lemma}\label{Quat}
Let $K\subset \mathbb{C}\subset\mathbb{H}$ be a finite degree Galois extension of $\mathbb{Q}$. Suppose that complex conjugation is central in Gal$(K/\mathbb{Q})$.  Then for any $u,v \in \mathcal{O}_K$, if $u+vj$ is a unit in $\mathcal{O}_K[j]$, then $u=0$ or $v=0$.
\end{lemma}

\begin{proof}
If $u+vj$ is a unit, then so is its quaternion conjugate $\bar{u}-vj$.  Thus their product $u\bar{u}+v\bar{v}$  is a unit in $\mathcal{O}_K$.  From lemma \ref{Gal} we then know that $u=0$ or $v=0$.
\end{proof}

It is worth noting that the centrality condition on complex conjugation is not redundant here, as we have for example the unit:  $$\sqrt{2-\sqrt2}+ \left(\sqrt{\sqrt2-1}\right)j \in \mathcal{O}_K[j],$$
where $K=\mathbb{Q}\left[\sqrt{2\pm\sqrt2},\sqrt{\pm\sqrt2-1}\right]$.  We thank Cam Mcleman for his \href{https://mathoverflow.net/questions/38680/can-an-algebraic-number-on-the-unit-circle-have-a-conjugate-with-absolute-value/38683#38683}{post} regarding this number.

\end{appendices}

\newpage

\noindent W.H. Mannan,  \hfill {\it email}: w.mannan@qmul.ac.uk

\noindent Queen Mary University of London, 

\noindent School of Mathematical Sciences, 

\noindent Mile End Road,

\noindent London E1 4NS.

\bigskip\bigskip
\noindent Tomasz Popiel, \hfill {\it email}: tomasz.popiel@auckland.ac.nz

\noindent The University of Auckland,

\noindent Department of Mathematics,

\noindent Auckland 1142.

\end{document}